\tikzset{
    vertex/.style = {
        circle,
        draw,
        outer sep = 3pt,
        inner sep = 3pt,
    },edge/.style = {->,> = latex'}
}
\def\sp{\mathop{\rm span}}
\def\adj{\mathop{\rm adj}}
\def\Diag{\mathop{\rm Diag}}
\def\rank{\mathop{\rm rank}}
\newcommand{\rr}{\mathbb{R}}
\newcommand{\1}{\mathbf{1}}
\newcommand{\0}{\mathbf{0}}
\newcommand{\pp}{{\mathbf{P}}}
\def\det{{\rm det}}
\newtheorem{theorem}{Theorem}
\newtheorem{corollary}{Corollary}
\newtheorem{lemma}{Lemma}
\newtheorem{definition}{Definition}
\newcommand{\al}{\alpha}
\begin{document}
\begin{center}
\Large{
Moore-Penrose inverse of distance Laplacians of trees are {\bf{Z}} matrices}\\
R. Balaji and Vinayak Gupta 
\end{center}
\begin{center}
  (In memory of Professor Michael Neumann)
  \end{center}
  \begin{center}
 \today
 \end{center}
\footnotetext[1]{Corresponding author: Vinayak Gupta, Email address: vinayakgupta1729v@gmail.com}
\begin{abstract}
We show that all off-diagonal entries in the Moore-Penrose inverse of the 
distance Laplacian matrix of a tree are non-positive.
\end{abstract}
Keywords: Trees, distance matrices, Laplacian matrices, Distance Laplacian matrices, Moore-Penrose inverse.\\
{\bf AMS CLASSIFICATION.} 05C50
\section{Introduction}
Let $T$ be a tree on $n$ vertices $\{1,\dotsc,n\}$. Suppose to each edge $(p,q)$
of $T$, a positive number $w_{pq}$ is assigned. 
We say that $w_{pq}$ is the weight of $(p,q)$.
The distance between any two vertices $i$ and $j$, denoted by $d_{ij}$, 
is the sum of all the weights in the path connecting $i$ and $j$.
The distance matrix of $T$ is then the $n \times n$ matrix with $(i,j)^{\rm th}$ entry equal to
$d_{ij}$ if $i \neq j$ and $d_{ii}=0$ for all $i=1,\dotsc,n$.

Define
\[\eta_{i}=\sum \limits_{j=1}^{n} d_{ij}~~\mbox{and}~~
\Delta:=\Diag(\eta_1,\dotsc,\eta_n).\] The distance Laplacian
of $T$ is then the matrix $L_D:=\Delta-D$. Distance Laplacian matrices are introduced and studied
in \cite{HANSEN}. Recall that the classical Laplacian matrix of $T$ is 
$L:=\nabla -A$, where $A$ is the adjacency matrix of $T$ and $\nabla$ is the diagonal matrix with $\nabla_{ii}$ equal to degree of vertex $i$. We note some properties
common to 
$L_D$ and $L$. 
\begin{enumerate}
\item[\rm (i)] $L_D$ and $L$ are positive semidefinite.
\item[\rm (ii)] Row sums of $L_D$ and $L$ are equal to zero.
\item[\rm (iii)] $\rank(L)=\rank(L_D)=n-1$. 
\item[\rm (iv)] All off-diagonal entries of $L$ and $L_D$ are non-positive.
\end{enumerate}
 In this paper, we deduce a property of distance Laplacian matrices of trees which the classical
 Laplacian matrices do not have. An $n \times n$ real matrix
 $A=[a_{ij}]$ is called a {\bf Z} matrix if all the off diagonal entries of
 $A$ are non-positive. The objective of this paper is to show that
 $L_{D}^{\dag}$ is always a {\bf Z} matrix. In the following example, 
  we see that $L$ is the classical Laplacian of a path on four vertices, but $L^\dag$ is not a {\bf Z} matrix.
 \[L=\left[
\begin{array}{ccccc}  
1  &-1 & 0 & 0\\
   -1  & 2& -1  &0\\
0 & -1  &2 &-1\\
 0&  0& -1&  1\end{array}\right] ~~~~~\mbox{and} ~~~~~L^\dag=
 \left[
\begin{array}{ccccc}
\frac{  7}{8}&   \frac{  1}{8} &  \frac{  -3}{8} & \frac{  -5}{8}\\
\frac{ 1}{8}& \frac{  3}{8}&\frac{ -1}{8}& \frac{ -3}{8}\\ 
 \frac{ - 3}{8}&  \frac{  -1}{8}&  \frac{  3}{8}&  \frac{  1}{8}\\
 \frac{  -5}{8}&  \frac{  -3}{8}&  \frac{  1}{8} & \frac{  7}{8}
\end{array}\right] .
\]

Distance matrices 
are well studied and have many interesting properties and applications: see \cite{GRMAT}. 
Numerical experiments reveal several new results on distance matrices. For example, a perturbation result says that if $D$ is the distance matrix of a tree and $L$ is the Laplacian of some connected graph (with same 
number of vertices), then all entries in $(D^{-1}-L)^{-1}$ are non-negative: 
See Theorem 4.6 in \cite{BKN}. A far reaching generalisation of this result
for matrix weighted trees is shown in
Theorem 2.1 in \cite{BLOCKPER}. Again this result was motivated by numerical experiments.
Our result here is also motivated by numerical computations.

 If $A$ is a {\bf Z}  matrix, then we say it is an {\bf M}  matrix if all the eigenvalues of $A$ have a non-negative real part. 
A question in \cite{PERRON} asks when the group inverse of a singular irreducible  matrix {\bf M} matrix is again an  {\bf M}   matrix.  Another question in resistive electrical networks \cite{STYAN} asks when is
the Moore-Penrose inverse of a Laplacian of a connected graph is an {\bf M} matrix.
Kirkland and Neumann \cite{KIRK} characterized all
weighted trees whose Laplacian have this property. 
 The result in this paper says that
distance Laplacians of trees are irreducible  {\bf M} matrices and their Moore-Penrose inverses are also {\bf M}   matrices. (We note that group inverse and Moore-Penrose inverse coincide for Laplacians and distance Laplacians.)

Our proof techniques in this paper 
works only for trees.
To extend the result in a more general setting, for example, resistance Laplacian matrices of
connected graphs, additional new techniques are certainly required.

\section{Preliminaries}
 \subsection{Notation}
 \begin{enumerate}
 \item[(i)] If $A=[a_{ij}]$ is an $n \times n$ matrix with $(i,j)^{\rm th}$ entry equal to $a_{ij}$, then the matrix $A(i|j)$ will denote the submatrix of $A$ obtained by deleting the $i^{\rm th }$ row and the $j^{\rm th}$ column of $A$. 
 
 \item[(ii)] Let $\Omega_1:=\{s_1,\dotsc,s_k\}$ and $\Omega_2:=\{t_1,\dotsc,t_m\}$ be subsets of
 $\{1,\dotsc,n\}$.  Then $A[\Omega_1,\Omega_2]$ will be the $k \times m$ matrix with $(i,j)^{\rm th}$ entry equal to 
$a_{s_{i}t_{j}}$. So, 
 $A=[a_{s_it
 _j}]$.
 
 \item[(iii)] The vector of all ones in $\rr^n$ will be denoted by $\1$. 
If $m<n$, then $\1_{m}$ will denote the vector of all ones in $\rr^{m}$. 
 The notation $J$ will stand for the symmetric matrix 
 with all entries equal to $1$.

 \item[(iv)] The Moore-Penrose inverse of a matrix $B$ is denoted by $B^\dag$
 and its transpose by $B'$.
 
 \item[(v)] To denote the subgraph induced by a set of vertices $W$, we use the notation $[W]$.   If $a$ and $b$ are any two vertices,
 then $\pp_{ab}$ will be the path connecting $a$ and $b$. The set of all vertices of a subgraph $H$ is denoted by $V(H)$.
 \end{enumerate}
 \subsection{Basic results and techniques}
 We shall use the following results. Let $T$ be a tree on $n$ vertices labelled $\{1,\dotsc,n\}$ where $n \geq 3$. Let
 $D=[d_{ij}]$ be the distance matrix of $T$.
 \begin{enumerate}
 \item[(P1)](Theorem 3.4, \cite{BLOCK})  If $L$ is the Laplacian matrix of a weighted tree and
 $L^\dag=[\alpha_{ij}]$, then
 \[d_{ij}=\alpha_{ii}+\alpha_{jj}-2 \alpha_{ij}. \]
 Because $\rank(L)=n-1$ and $L$ is positive semidefinite, it follows that for any $0\neq x=(x_1,\dotsc,x_n)' \in \rr^n$ such that
 $\sum \limits_{i=1}^{n} x_i=0$, 
 \[\sum_{i,j} x_i x_j d_{ij}=-2 \sum_{i,j} x_i x_j \alpha_{ij} <0. \] 
 
 \item[(P2)] 
  Triangle inequality: If $i,j,k \in \{1,\dotsc,n\}$, then 
 \[d_{ik} \leq d_{ij}+d_{jk}. \] 
 
 \item[(P3)] Let $\nu$ be a positive integer and the sets
$L_1,\dotsc,L_N$ partition $\{1,\dotsc,\nu\}$.
Let $A=[a_{uv}]$ be a $\nu \times \nu$ matrix
such that $A[L_i,L_j]=O$ for all $i<j$. Then 
there exists a permutation matrix $P$
such that
\[P'AP=
\left[
\begin{array}{ccccc}
A[L_1,L_1] & O & \hdots & O\\
 A[L_2,L_1] & A[L_2,L_2] & \hdots & O \\
 \hdots & \hdots & \ddots & \vdots \\
 A[L_N,L_1] & A[L_N,L_2]& \hdots & A[L_N,L_N]\\
\end{array}
\right].
\]
As a consequence,
\begin{enumerate}
\item If $a_{xy}=0$ for all $x \in L_i$, $y \in L_j$ and $i<j$, then $A$ is similar to a block lower triangular matrix with
$i^{\rm th}$ diagonal block equal to $A[L_i,L_i]$.

\item If $a_{xy}=0$ for all $x \in L_i$, $y \in L_j$ and $i>j$, then $A$ is similar to a block upper triangular matrix with
$i^{\rm th}$ diagonal block equal to $A[L_i,L_i]$. 

\item If $a_{xy}=0$ for all $x \in L_i$, $y \in L_j$ and $i\neq j$, then $A$ is similar to a block diagonal matrix with
$i^{\rm th}$ diagonal block equal to $A[L_i,L_i]$.
\end{enumerate}

\item[(P4)] ({\it Matrix determinant lemma}) Let $A$ be a $m\times m$ matrix and $x,y$ be   $m\times 1$ vectors. Then 
\[\det(A+xy')=\det(A)+y'\adj(A)x.\]
 \end{enumerate}

\section{Main result}
Consider a tree $T$ with vertices labelled $\{1,\dotsc,n\}$. 
If $n=2$, then the result is easy to verify.
We assume $n>2$.
Let $D:=[d_{ij}]$ denote the distance matrix of $T$ and
$\eta_i$ be the $i^{\rm th}$ row sum of $D$. Define $\Delta:=\Diag(\eta_1,\dotsc,\eta_n)$. Let
\[S:=\Delta-D. \]  Each row sum of $S$ is zero. So, all the cofactors of $S$ are equal. Let this common cofactor be
$\gamma$.  
Henceforth, we fix the notation $T,D,\Delta$ and $S$.
Our aim is to show that all the off-diagonal entries of $S^\dag=[s^{\dag}_{ij}]$ are non-positive. We shall show that $s^\dag_{12} \leq 0$. 
A similar argument can be repeated to any other off-diagonal entry of $S^\dag$.
In the first step, we show that $s^{\dag}_{12} \leq 0$ if and only if the determinant of a certain matrix constructed from $D$ is non-negative. 
\subsection{Reformulation of the problem}
We shall find a matrix $R$ such that that $s^{\dag}_{12} \leq 0$ if and only if $\det(R) \geq 0$. The following lemma will be useful.
\begin{lemma} \label{S}
Let $S_{*}:=S+ J$. 
The following items hold.
\begin{enumerate}
\item[{\rm (i)}] $S$ is positive semidefinite and $\rank(S)=n-1.$
\item[\rm (ii)] $S_{*}^{-1}=S^\dag+\frac{J}{n^2}$. 
\item[\rm (iii)] $\det(S_{*})=n^2 \gamma$.
\item[\rm (iv)] Let $C=S(1|2)$. Then,
\[s^{\dag}_{12}=\frac{\1_{n-1}'C^{-1}\1_{n-1}}{n^2}. \]
\end{enumerate}
\end{lemma}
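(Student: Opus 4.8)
The plan is to prove the four items in order, using (i)--(iii) as scaffolding for the real target (iv). For (i) I would write $x'Sx=\sum_i\eta_i x_i^2-\sum_{i,j}d_{ij}x_ix_j$; since $D$ is symmetric and $\eta_i=\sum_j d_{ij}$, symmetrizing gives $x'Sx=\frac12\sum_{i,j}d_{ij}(x_i-x_j)^2\ge 0$, so $S$ is positive semidefinite. As all row sums of $S$ vanish, $S\1=\0$ and $\rank(S)\le n-1$; equality follows by identifying $\ker S$: if $Sx=\0$ then $x'Sx=0$ forces $d_{ij}(x_i-x_j)^2=0$ for all $i,j$, and since $d_{ij}>0$ whenever $i\ne j$ this makes $x$ a scalar multiple of $\1$. (Alternatively one can quote (P1): $x'Sx>0$ for every $x\ne\0$ with $\1'x=0$.)

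For (ii) I would verify the identity by multiplying out. From $S\1=\0$ one gets $SJ=S\1\1'=O$; since $S$ is symmetric, $S^\dag$ is symmetric with $\ker S^\dag=\ker S$, so $S^\dag\1=\0$ and $JS^\dag=O$; $SS^\dag$ is the orthogonal projector onto $\mathrm{col}(S)=\1^\perp$, i.e.\ $SS^\dag=I-\frac1n J$; and $J^2=nJ$. Hence
\[ S_*\Big(S^\dag+\frac{J}{n^2}\Big)=SS^\dag+\frac1{n^2}SJ+JS^\dag+\frac1{n^2}J^2=\Big(I-\frac1nJ\Big)+\frac1nJ=I, \]
and a one-sided inverse of a square matrix is two-sided. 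For (iii) I would apply the matrix determinant lemma (P4) to $S_*=S+\1\1'$: $\det(S_*)=\det(S)+\1'\adj(S)\1=0+\1'(\gamma J)\1=n^2\gamma$, where $\adj(S)=\gamma J$ because each entry of $\adj(S)$ is, up to sign, a minor of $S$ of order $n-1$, that is, a cofactor, and all cofactors of $S$ equal $\gamma$ (and $J'=J$).

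For (iv) I would read off the $(1,2)$ entry of the identity in (ii), namely $s^\dag_{12}=(S_*^{-1})_{12}-\frac1{n^2}$. By Cramer's rule together with the symmetry of $S_*$,
\[ (S_*^{-1})_{12}=\frac{-\det(S_*(2|1))}{\det(S_*)}=\frac{-\det(S_*(1|2))}{n^2\gamma}, \]
using (iii) in the denominator. Now $S_*(1|2)=S(1|2)+\1_{n-1}\1_{n-1}'=C+\1_{n-1}\1_{n-1}'$, so (P4) again gives $\det(S_*(1|2))=\det(C)+\1_{n-1}'\adj(C)\1_{n-1}$. The common-cofactor hypothesis gives in particular $(-1)^{1+2}\det(S(1|2))=\gamma$, hence $\det(C)=-\gamma$; thus $\gamma\ne0$, $C$ is invertible (also visible from (i), since $S(1|1)$ is positive definite), and $\adj(C)=(\det C)C^{-1}=-\gamma C^{-1}$. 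Substituting, $\det(S_*(1|2))=-\gamma-\gamma\,\1_{n-1}'C^{-1}\1_{n-1}$, so $(S_*^{-1})_{12}=\frac1{n^2}+\frac1{n^2}\1_{n-1}'C^{-1}\1_{n-1}$, and therefore $s^\dag_{12}=\frac1{n^2}\,\1_{n-1}'C^{-1}\1_{n-1}$.

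Essentially everything here is mechanical; the only place needing genuine care is the bookkeeping in (iv) — keeping the cofactor signs straight so that $\det(C)=-\gamma$ rather than $+\gamma$, using the symmetry of $S_*$ to pass from $S_*(2|1)$ to $S_*(1|2)$, and recording that $C$ is nonsingular so that $\adj(C)$ may legitimately be rewritten as $-\gamma C^{-1}$. The remaining steps are short, routine verifications.
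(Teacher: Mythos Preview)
Your proof is correct and follows essentially the same route as the paper: items (ii)--(iv) are handled identically (invertibility of $S_*$ via direct multiplication, $\det(S_*)$ via (P4), then Cramer's rule plus (P4) applied to $S_*(1|2)=C+\1_{n-1}\1_{n-1}'$ together with $\det(C)=-\gamma$). The only real difference is in (i), where you use the elementary identity $x'Sx=\tfrac12\sum_{i,j}d_{ij}(x_i-x_j)^2$ to get positive semidefiniteness and identify the kernel directly, whereas the paper invokes (P1) to conclude $x'Sx>0$ on $\1^\perp$; your argument is self-contained and avoids appealing to the Moore--Penrose inverse of the ordinary Laplacian. One small quibble: your parenthetical ``also visible from (i), since $S(1|1)$ is positive definite'' does not by itself show that the \emph{non-principal} submatrix $C=S(1|2)$ is invertible---but your primary justification via $\det(C)=-\gamma\neq 0$ (from (ii)/(iii)) is sound, so this does not affect the proof.
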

\begin{proof}
It follows from the definition of $S$ that, $S\1=\0$.
Let $0 \neq x\in \1^{\perp}$. By (P1), $x'Dx > 0$. So,
\[x'Sx=x'\Delta x-x'Dx >0.\]
If $x\in \sp\{ \1\}$, then $Sx=\0$.
Hence $S$ is positive semidefinite and $S$ has $n-1$ positive eigenvalues. So, $\rank(S)=n-1$. This proves (i).

Since $S\1=\0$ and $\rank(S)=n-1$, 
\[SS^\dag=I-\frac{J}{n}.\]
From a direct verification, 
\[S_{*}^{-1}=S^\dag+\frac{J}{n^2}.\]
This proves (ii).

Writing $S_{*}=S+\1\1'$, by (P4),
\[\det(S_{*})=\det(S)+\1' \adj(S)\1.\]
As $S \1=0$, $\det(S)=0$. Since all the cofactors of $S$ are equal to $\gamma$, \[\adj(S)=\gamma J.\] 
Therefore, $\det(S_{*})=n^2\gamma$.
Item (iii) is proved.

 Each cofactor of $S$ is $\gamma$ and $C=S(1|2)$. So,
\[\det(C)=\det(S(1|2))=-\gamma.\]
In view of item (ii), 
\begin{equation*}
\begin{aligned}
s^{\dag}_{12}&=(S_{*}^{-1})_{12}-\frac{1}{n^2}\\
&=
\frac{1}{\det(S_*)}(\adj S_*)_{12}-\frac{1}{n^2}\\
&=
\frac{-1}{n^2\gamma}(\det(S_{*}(1|2))-\frac{1}{n^2}\\
&=
-\frac{1}{n^2\gamma}(\det(S(1|2)+J(1|2))-\frac{1}{n^2}.
\end{aligned}
\end{equation*}
By the matrix determinant lemma (P4),
\begin{equation*}
\begin{aligned}
s^{\dag}_{12}
 &=\frac{-1}{n^2\gamma}( \det(C)+\det(C) \1_{n-1}'C^{-1}\1_{n-1} )-\frac{1}{n^2}\\
 &=
\frac{-1}{n^2\gamma}(-\gamma-\gamma \1_{n-1}'C^{-1}\1_{n-1} )-\frac{1}{n^2}\\&=
 \frac{\1_{n-1}'C^{-1}\1_{n-1}}{n^2}.
\end{aligned}
\end{equation*}
Item (iv) is proved and the proof is complete.
\end{proof}

By numerical computations, we note that the sign of $s_{12}^\dag$ depends on the sign of the determinant of a certain matrix
defined from $D$. We introduce this matrix now.
\begin{definition}\label{R}
For any $i,j \in \{3,\dotsc,n\}$, define
\[R_{ij}:=
\begin{cases}
-d_{21}+d_{i1}+d_{2j}-d_{ij} & i\neq j\\
-d_{21}+d_{i1}+d_{2i}+   \sum\limits_{k=1}^{n} d_{ik} & i=j.
\end{cases}
\]
Set $R:=[R_{ij}]$. 
\end{definition}
The order of $R$ is $n-2$. 
By triangle inequality,
\[-d_{21}+d_{i1}+d_{2i} \geq 0~~~~i=3,\dotsc,n.\]
Hence, 
\[R_{ii}>0~~~i=3,\dotsc,n. \]

\begin{lemma}\label{s12detr}
$s^{\dag}_{12} \leq 0$   if and only if    $ \det (R)\geq 0$.

\end{lemma}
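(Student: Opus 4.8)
The plan is to connect the quantity $\mathbf{1}_{n-1}'C^{-1}\mathbf{1}_{n-1}$ from Lemma~\ref{S}(iv) directly to $\det(R)$ via the matrix determinant lemma (P4). By Lemma~\ref{S}(iv), $s^\dag_{12}\le 0$ if and only if $\mathbf{1}_{n-1}'C^{-1}\mathbf{1}_{n-1}\le 0$, where $C=S(1|2)$. Since $\det(C)=-\gamma$ and, by Lemma~\ref{S}(i) together with positive semidefiniteness of $S$, one expects $\gamma>0$ (the common cofactor of a singular positive semidefinite matrix of rank $n-1$ is positive — this can be extracted from $\mathrm{adj}(S)=\gamma J$ and $S\succeq 0$), we have $\det(C)<0$, so $C^{-1}$ exists and $\mathbf{1}'C^{-1}\mathbf{1}\le 0$ is equivalent to $\det(C)\cdot\mathbf{1}'C^{-1}\mathbf{1}\ge 0$, i.e. to $\det(C+\mathbf{1}\mathbf{1}')-\det(C)\ge 0$ by (P4), i.e. to $\det(C+J)\ge \det(C)=-\gamma$. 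So the first reduction is: $s^\dag_{12}\le 0 \iff \det(C+J)+\gamma\ge 0$.

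Next I would identify $C+J$ (or a matrix obtained from it by row/column operations preserving the determinant) with the matrix $R$ from Definition~\ref{R}. Here $C=S(1|2)$ is the $(n-1)\times(n-1)$ matrix with rows indexed by $\{2,3,\dots,n\}$ and columns by $\{1,3,4,\dots,n\}$, with entries $S_{rc}=\delta_{rc}\eta_r-d_{rc}$. Adding $J$ gives entries $\delta_{rc}\eta_r-d_{rc}+1$. The natural move is to use the row corresponding to vertex $2$ and the column corresponding to vertex $1$ to perform elimination: subtract appropriate multiples, or rather perform the standard trick of subtracting the "row~$2$" and "column~$1$" contributions so that the remaining $(n-2)\times(n-2)$ block has entries of the form $-d_{21}+d_{i1}+d_{2j}-d_{ij}$ for $i\ne j$ (rows and columns now both indexed by $\{3,\dots,n\}$) — exactly the off-diagonal entries of $R$. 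The diagonal entries of $R$ carry the extra term $\eta_i=\sum_k d_{ik}$ coming from the diagonal $\eta_i$ of $S$ and a shift; one must check the bookkeeping so that the diagonal comes out to $-d_{21}+d_{i1}+d_{2i}+\sum_k d_{ik}$. The determinant identity one aims for is $\det(C+J)=\pm\det(R) \mp \gamma$ or, more likely, something like $\det(C+J)+\gamma=\pm\det(R)$, with the sign fixed by tracking the permutation parity introduced by deleting a non-principal submatrix and reordering; this would immediately give $s^\dag_{12}\le 0\iff\det(R)\ge 0$.

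Concretely, the cleanest route I would try: write $C+J$, and recall that $\det(S(1|2))$ can be expanded; form the $(n-1)\times(n-1)$ matrix, then do elementary operations: for each $i\in\{3,\dots,n\}$ subtract the row of $C+J$ indexed by $2$ from the row indexed by $i$, and for each $j\in\{3,\dots,n\}$ subtract the column indexed by $1$ from the column indexed by $j$ (these operations do not change the determinant). After these operations the $(i,j)$ entry for $i,j\in\{3,\dots,n\}$ becomes $(S_{i1}+1)-(S_{21}+1)-(S_{ij}+1)+(S_{2j}+1)=S_{i1}-S_{21}-S_{ij}+S_{2j}=-d_{i1}+d_{21}+ (\text{diag terms}) + d_{ij}-d_{2j}$, and one tracks the diagonal $\eta$-terms carefully; the first row/column (indices $2$ and $1$) then should reduce to something with a single surviving entry or a rank-one correction whose contribution accounts for the $\gamma$ shift. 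Matching signs against Definition~\ref{R} yields the claim.

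The main obstacle I anticipate is the sign and parity bookkeeping: $S(1|2)$ is a submatrix obtained by deleting row $1$ and column $2$ (not a principal submatrix), so $\det(S(1|2))=(-1)^{1+2}M_{12}=-\gamma$ requires care, and each elementary reduction plus the final reindexing of $\{3,\dots,n\}$ against the $2,1$ labels can flip signs. I would handle this by first pinning down the signs on a small example ($n=3$ or $n=4$, e.g. the path in the introduction), confirming $\det(R)$ has the right sign there, and only then writing the general determinant computation, so that the chain $s^\dag_{12}\le 0\iff \mathbf{1}'C^{-1}\mathbf{1}\le 0 \iff \det(C+J)\ge\det(C)\iff \det(R)\ge 0$ is sign-consistent throughout. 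A secondary point to nail down is $\gamma>0$, which is needed to know $\det(C)<0$ and hence that the inequality direction is preserved when multiplying through by $\det(C)$; this follows from $S$ being positive semidefinite of rank $n-1$ with $\mathrm{adj}(S)=\gamma J$, since then $\gamma$ equals (up to the positive normalization $1/n$) the product of the nonzero eigenvalues of $S$.
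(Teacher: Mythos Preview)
Your approach is essentially the paper's: the row and column subtractions you describe are exactly the congruence $C\mapsto (Q^{-1})'CQ^{-1}$ with $Q=\left(\begin{smallmatrix}1&\mathbf{1}_{n-2}'\\ \mathbf{0}&I_{n-2}\end{smallmatrix}\right)$ that the paper uses, and both routes identify $R$ as the $(1|1)$ block of the result. The paper sidesteps your $C+J$ detour and sign bookkeeping by noting directly that $(QC^{-1}Q')_{11}=\mathbf{1}_{n-1}'C^{-1}\mathbf{1}_{n-1}$ and that this entry equals $\det\!\big((Q^{-1})'CQ^{-1}(1|1)\big)/\det(C)=-\det(R)/\gamma$ via the cofactor formula, which fixes all the signs in one stroke.
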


\begin{proof}
Let $C=S(1|2)$
and $Q$ be the $(n-1)\times (n-1)$ matrix  
$\left[
\begin{array}{rrrrrrrrrrrrr}
1   & \1_{n-2}\\
\0 & I_{n-2}
 \end{array}\right ]$. 
Then,
$Q^{-1}=\left[
\begin{array}{rrrrrrrrrrrrr}
1   & -\1_{n-2}\\
\0 & I_{n-2}
 \end{array}\right ]$.
By a direct computation, 
\[Q'^{-1}CQ^{-1}=\left[
\begin{array}{rrrrrrr}
 s_{21}  & -s_{21}+s_{23} & \dotsc &  ~~~~~~~~~~~ -s_{21}+s_{2n}\\
-s_{21}+s_{31} &  s_{21}-s_{31}-s_{23}+s_{33}     & \dotsc & s_{21}-s_{31}-s_{2n}+s_{3n}  \\
\hdots ~~~~~ &     \hdots  ~~~    & \ddots &    \hdots~~~~~\\
-s_{21}+s_{n1} &  s_{21}-s_{n1}-s_{23}+s_{n3}   & \dotsc  & s_{21}-s_{n1}-s_{2n}+s_{nn} 
 \end{array}\right ].\]
 The entries of $S$ are
\[s_{ij}=
\begin{cases}
-d_{ij} &  {i\neq j}\\
\sum \limits_{k=1}^{n}  d_{ik}      & {i=j}.
\end{cases}
\]
Hence,
\begin{equation}\label{qcqvalue}
Q'^{-1}CQ^{-1}(1|1)=R.
\end{equation}
We note that $$\det(C)=-\gamma < 0~~\mbox{and}~~\det(Q)=1.$$ So, \[\det(Q'^{-1}CQ^{-1})=-\gamma< 0.  \]
By a simple computation,
\begin{equation}\label{1c1}
(QC^{-1}Q')_{11}=\1_{n-1}'C^{-1}\1_{n-1}.
\end{equation}
Using (\ref{qcqvalue}),
\begin{equation}\label{qcq}
\begin{aligned}
(QC^{-1}Q')_{11}& = \frac{1}{\det(Q'^{-1}CQ^{-1})}\det(Q'^{-1}CQ^{-1}(1|1))\\&=                  -\frac{1}{\gamma}\det (R).
\end{aligned}
\end{equation}
By (\ref{1c1}) and (\ref{qcq}),
\[\1_{n-1}'C^{-1}\1_{n-1}=    -\frac{1}{\gamma}\det (R).\]
By item (iv) in Lemma \ref{S}, it now follows that  
$s^{\dag}_{12} \leq 0$ if and only if  $\det (R)\geq 0$.
\end{proof}

\subsection{A property of $R$}
We now proceed to show that $\det(R) \geq 0$.
The following lemma will be useful in the sequel.

\begin{lemma}\label{big}
Let $\al \in V(\pp_{12})$. Suppose there exists a connected component $\widetilde{X}$ 
of $T \smallsetminus (\al)$ not containing $1$ and $2$. Let $u \in V(\widetilde{X})$ be the vertex
adjacent to $\al$. Consider a connected subgraph $X$ of 
$\widetilde{X}$ containing $u$.
 Put $E:=V(X)$. Then,
$R[E,E]$ is a positive semidefinite matrix.
\end{lemma}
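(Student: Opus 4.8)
The plan is to exploit that the cut vertex $\al$ separates the set $E$ from $\{1,2\}$: this causes almost all the terms in Definition \ref{R} to telescope, and exhibits $R[E,E]$ as the sum of a classical positive semidefinite ``tree distance kernel'' and a positive diagonal matrix.

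First I would record the distance identities forced by the separation. Since $\al\in V(\pp_{12})$ we have $d_{12}=d_{1\al}+d_{\al2}$, and since $E=V(X)\subseteq V(\widetilde X)$ with $\widetilde X$ a connected component of $T\smallsetminus(\al)$ containing neither $1$ nor $2$, for every $i\in E$ the paths $\pp_{i1}$ and $\pp_{i2}$ must pass through $\al$; hence
$$d_{i1}=d_{i\al}+d_{\al1},\qquad d_{i2}=d_{i\al}+d_{\al2}\qquad(i\in E).$$
Substituting into Definition \ref{R}, the contributions of $d_{\al1}$ and $d_{\al2}$ cancel and one obtains, for $i,j\in E$,
$$R_{ij}=d_{i\al}+d_{j\al}-d_{ij}\quad(i\ne j),\qquad R_{ii}=2d_{i\al}+\eta_i,$$
so that, with the convention $d_{ii}=0$,
$$R[E,E]=\bigl[\,d_{i\al}+d_{j\al}-d_{ij}\,\bigr]_{i,j\in E}+\Diag(\eta_i:i\in E).$$

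It then remains to see that both summands are positive semidefinite; in fact the second is positive definite because each $\eta_i>0$ (here $n>2$ is used), so this would already finish the proof. For the first summand, observe that for $i,j\in E$ the path $\pp_{ij}$ lies inside $\widetilde X$, so all the distances occurring there are unchanged if computed in the subtree $\widehat X$ induced by $V(\widetilde X)\cup\{\al\}$. Rooting $\widehat X$ at $\al$ and letting $P_i$ be the edge set of $\pp_{\al i}$ (with edge weights $w_e$), the standard tree identity ``$\pp_{ij}$ uses exactly the edges of $P_i\triangle P_j$'' gives $d_{i\al}+d_{j\al}-d_{ij}=2\sum_{e\in P_i\cap P_j}w_e$; hence the first summand equals $2\Phi'\Phi$, where the $i$-th column of $\Phi$ is $\bigl(\sqrt{w_e}\,\mathds{1}[e\in P_i]\bigr)_{e\in E(\widehat X)}$, and is therefore positive semidefinite. (Alternatively, applying (P1) to the tree $\widehat X$ realizes this matrix as a Gram matrix of the positive semidefinite form $L_{\widehat X}^{\dag}$.)

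I do not expect a genuine obstacle here: the only ``real'' input is the well-known positive semidefiniteness of the tree distance kernel $[d_{i\al}+d_{j\al}-d_{ij}]$, and everything else is bookkeeping. The one point to watch is carefully distinguishing the distances that reach outside $\widetilde X$ (those to $1$, to $2$, and the ones summing to $\eta_i$) from the distances internal to $\widetilde X$, since it is precisely this split that produces the decomposition above.
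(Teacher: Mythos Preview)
Your argument is correct, and it is genuinely different from the paper's. After the same distance substitutions, the paper also arrives at $R_{rr}=2d_{r\al}+\eta_r$ (its equation (\ref{126})), but it does \emph{not} write $R[E,E]$ as $[d_{i\al}+d_{j\al}-d_{ij}]+\Diag(\eta_i)$ and invoke the tree distance kernel. Instead it runs an induction on $|E|$: it chooses $\delta\in E$ with $d_{\delta\al}$ maximal among non-pendant vertices, picks a pendant $x_t$ adjacent to $\delta$, shows that replacing every diagonal entry of $R[E,E]$ by $2d_{\delta\al}$ gives a matrix $P$ whose last two columns coincide, reduces to $E\smallsetminus\{x_t\}$, and finally adds back the nonnegative diagonal $\Lambda=\Diag(R_{x_ix_i}-2d_{\delta\al})$.

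Your approach is shorter and conceptually cleaner, and in fact it proves more: the Gram/kernel decomposition never uses that $X$ is connected or that $u\in V(X)$ (only that $E\subseteq V(\widetilde X)$), and because $\Diag(\eta_i)$ is positive definite you actually get that $R[E,E]$ is positive definite, not merely semidefinite. The paper's route, on the other hand, is completely self-contained (no appeal to the tree kernel or to (P1)) and makes explicit the combinatorial structure (farthest branch, pendant column duplication) that the later block decompositions rely on. Your parenthetical alternative via (P1) also goes through: writing $d_{ij}=\alpha_{ii}+\alpha_{jj}-2\alpha_{ij}$ in $\widehat X$ yields $d_{i\al}+d_{j\al}-d_{ij}=2(e_i-e_\al)'L_{\widehat X}^{\dag}(e_j-e_\al)$, a Gram matrix for the positive semidefinite form $L_{\widehat X}^{\dag}$.
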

\begin{proof}
If $E=\{u\}$, then $R[E,E]=[R_{uu}]$. As $R_{uu}>0$,
the lemma is true in this case.
Let $|E| \geq 2$. 

{\bf Claim 1:}
$R[E,E]$ is symmetric.\\
Let $r,s \in E$. Recall that
 \begin{equation}\label{rs81}
 R_{rs}=-d_{21}+d_{r1}+d_{2s}-d_{rs}.
 \end{equation}
 By our assumption,
 \begin{equation}\label{r11}
d_{r1}=d_{r\al }+d_{\al 1}~~\mbox {and }~ d_{s2}=d_{s\al }+d_{\al 2}.
 \end{equation}
 By (\ref{rs81}) and (\ref{r11}), 
 \begin{equation}\label{rs812}
  R_{rs}=-d_{21}+d_{r\al }+d_{\al 1}+d_{s\al }+d_{\al 2}-d_{rs}.
 \end{equation}
 Again by our assumption, 
  \begin{equation}\label{r12}
d_{r2}=d_{r\al }+d_{\al 2}~~\mbox {and }~ d_{s1}=d_{s\al }+d_{\al 1}.
 \end{equation}
By (\ref{rs812}) and (\ref{r12}), 
\[ R_{rs}=-d_{21}+d_{s1}+d_{r2}-d_{rs}.\]
 
 The right hand side of the above equation is $R_{sr}$.
 The claim is proved.
 
We know that $u\in E$ and is adjacent to $\al$. 
Let $\Omega$ be the set of all non-pendant vertices in $T$.
Since $X$ is connected, and has at least two vertices,
$u$ is adjacent to a vertex in $E$. Hence,
$u \in E \cap \Omega$. So, $E \cap \Omega$ is a non-empty set.

Let $\delta \in E$ be such that 
 \[d_{\delta \al}= \max  \{  d_{x \al}:x\in E\cap \Omega \}.\]
Since $X$ is a tree, there exists a pendant vertex 
adjacent to $\delta$. 
Without loss of generality, let 
$E=\{x_1,\dotsc,x_{t-1},x_{t}\}$ where $x_1=u$, $x_{t-1}=\delta$ and $x_t$ is pendant
vertex adjacent to $x_{t-1}$.\\
{ \bf Claim 2}:    
\begin{equation}\label{ixt0}
R_{ix_{t-1}}=R_{ix_{t}}~~\mbox{for all}~ i \in \{x_1,x_2,\dotsc,x_{t-2}\}.
\end{equation}
By the definition of $R$,
 \begin{equation}\label{ixt1}
 \begin{aligned}
 R_{ix_{t-1}}-R_{ix_{t}}
 &=-d_{21}+d_{i1}+d_{2x_{t-1}}-d_{ix_{t-1}}-(-d_{21}+d_{i1}+d_{2x_{t}}-d_{ix_{t}})\\
 &=(d_{2x_{t-1}} -d_{2x_{t}} )-( d_{ix_{t-1}}- d_{ix_{t}}    ).
 \end{aligned}
 \end{equation}
Since
$x_t$ is a pendant vertex and is  adjacent to $x_{t-1}$,
we have 
\[d_{2x_{t-1}} -d_{2x_{t}}=-d_{x_tx_{t-1}},\] 
and since $i \in \{x_1,\dotsc,x_{t-2}\}$, we have
\[d_{ix_{t-1}}- d_{ix_{t}}=-d_{x_tx_{t-1}}  . \]
By (\ref{ixt1}), we now get
\[ R_{ix_{t-1}}=R_{ix_{t}}.\]
The claim is true.

{ \bf Claim 3}: 
$R_{x_{t-1}x_{t}}=2d_{{\delta \al}}= 2d_{x_{t-1}\al}$.\\
By definition,
\begin{equation}\label{xtxt-12}
\begin{aligned}
R_{x_{t-1}x_{t}}&= -d_{21}+d_{x_{t-1} 1}+d_{2x_{t}}-d_{x_{t-1}x_{t}}.
\end{aligned}
\end{equation}
Vertices $2$ and $x_t$ belong to different components of $T \smallsetminus(\al)$. Also, 
$x_t$ and $x_{t-1}$ are adjacent and $x_t$ is pendant in $X$.
Hence,
\begin{equation}\label{xtxt-13}
d_{2x_t}=d_{2\al}+d_{\al x_t}=d_{2\al}+d_{\al x_{t-1}}+d_{x_{t-1}x_{t}}.
\end{equation}
By (\ref{xtxt-12}) and (\ref{xtxt-13}),
\[R_{x_{t-1}x_{t}}=-d_{21}+d_{x_{t-1} 1}+d_{2\al}+d_{\al x_{t-1}}.\]
As $\al \in V(\pp_{12})$, $d_{21}=d_{2\al}+d_{\al 1}$. Hence
\begin{equation}\label{xtxt-131}
R_{x_{t-1}x_{t}}=-d_{\al 1}+d_{x_{t-1} 1}+d_{\al x_{t-1}}.
\end{equation}
As $1 \not \in V(\tilde{X})$,  
$d_{x_{t-1} 1}=d_{x_{t-1} \al}+d_{\al 1}$.
Hence by (\ref{xtxt-131}),
\begin{equation}\label{xtxt-132}
R_{x_{t-1}x_{t}}=2d_{\al x_{t-1}}.
\end{equation}
This proves the 
claim.

{\bf  Claim 4 }:  
Diagonal entries of $R[E ,E]$ are greater  than or equal to   $2d_{\delta \al}$.\\
Let $r \in E$. 
By definition,
\begin{equation}\label{123}
R_{rr}=-d_{21} +d_{r 1} + d_{2r} +\sum \limits_{k=1}^{n} d_{r k}.
\end{equation}
 Since $r \in E$, $1\not\in E$ and  $2\not\in E$,
\begin{equation}\label{124}
d_{r 1}=d_{r \al} +d_{1 \al}\mbox{ and }  d_{r 2}=d_{r \al} +d_{2 \al} .
\end{equation}
By (\ref{123}) and (\ref{124}),
\begin{equation}\label{125}
\begin{aligned}
R_{rr}=-d_{21}+d_{r \al} +d_{1 \al}+d_{r \al} +d_{2 \al}+\sum_{k=1}^{n} d_{r k}.
\end{aligned}
\end{equation}
 As $d_{21}=d_{2\al}+d_{\al 1}$, (\ref{125}) simplifies to
\begin{equation}\label{126}
R_{rr}=2d_{r \al}+\sum_{k=1}^{n}d_{r k}.
\end{equation}
Case 1: Suppose $r \notin \{\delta,x_t\}$.\\
Then 
\begin{equation} \label{rrr}
R_{rr}=2d_{r \al}+\sum \limits_{k=1}^{n} d_{r k}\geq 2d_{r \al}+d_{r \delta }+d_{r x_t}.
\end{equation}
 By triangle inequality, \[d_{r \al}+d_{r \delta}\geq d_{\delta \al}~~\mbox{and}~~ d_{r \al}+d_{r x_t}\geq d_{x_t \al}.\]
In view of (\ref{rrr}), 
\[ R_{rr} \geq d_{x_t \al}  
+d_{\delta \al}.                 \]
As $x_t$ is adjacent only to $\delta$, 
\[ R_{rr}\geq d_{x_t \al}  
+d_{\delta \al} =d_{{\delta} \al}+ d_{{\delta} x_t} 
+d_{\delta \al}\geq 2d_{\delta \al}.\]
Case 2: If $r=\delta $, then it is immediate from
(\ref{126}).\\
Case 3: Suppose $r=x_t $.

By (\ref{126}),
\[
R_{x_tx_t}\geq 2d_{x_t \al}.
\]
Since $x_t$ is pendant and adjacent to $\delta$, 
\[
d_{x_t \al}=d_{\delta \al}+d_{x_t \delta}=d_{\delta \al}+ d_{{\delta} x_t}   \geq d_{\delta \al}.
\]
By the two previous inequalities, 
\[R_{x_tx_t}\geq 2d_{\delta \al}.\]
The claim is proved.

Define a $t\times t$ matrix 
\[
P:=\left[
\begin{array}{ccccccccccccc}
2d_{\delta \al} & R_{x_1x_2} & \hdots & R_{x_1x_t} \\
R_{x_2x_{1}} & 2 d_{\delta \al} & \hdots & R_{x_2x_t} \\ 
\hdots & \hdots & \ddots & \hdots \\
R_{x_t x_1} & R_{x_{t} x_{2}} & \hdots & 2 d_{\delta \al}
\end{array}
\right].
\]
Because $R[E,E]$ is a symmetric matrix,
$P$ is symmetric.

{ \bf Claim 5}: $P$ is positive semidefinite.\\
We will prove this by using induction on $|E|$. Suppose $|E|$ has only two vertices.
Write $E=\{x_1,x_2\}$, where $u=x_1$. By a simple verification,
\[P=\left[
\begin{array}{rrrrrrrrrrrrr}
2d_{\alpha u}   &2d_{\alpha u}\\2d_{\alpha u} &2d_{\alpha u}
 \end{array}\right ].   \] 
 So, the claim is true in this case.
Suppose the result is true if $|E|<t$. 
Define a $t \times t$ matrix by
 \[Q_1:= \left[
\begin{array}{rrrrrrrrrrrrr}
I_{t-2} &\0 &\0\\
\0' &1 &-1 \\
\0' &0 &1
 \end{array}\right ].   \] 
 We show that $Q_1'PQ_1$ is positive semidefinite.
Claim 2 and Claim 3 imply that
the last two columns of $P$ are equal. 
Hence, by direct computation,  
\begin{equation}
Q_1'PQ_1=
\left[
\begin{array}{rrrrrrrrrrrrr}
P(x_t|x_t) & \0\\
\0' &0
 \end{array}\right ].
 \end{equation}
  Define \[X':=X \smallsetminus (x_t).\]
  Because $x_t$ is pendant, $X'$ is a connected subgraph of $X$ and $u \in V(X')$. Set
  \[E':=V(X')=\{x_1,\dotsc,x_{t-1}\}, ~\mbox{where}~~ x_1=u. \]
 Define
  \[d_{\mu 1}:=  \max \{d_{x\al}: x \in \Omega \cap E'\}.\]
By induction hypothesis,
\[P_1:=
\left[
\begin{array}{ccccc}
2d_{\mu \al} & R_{x_1 x_{2}} & \hdots & R_{x_1x_{t-1}} \\
R_{x_{2} x_{1}} & 2d_{\mu \al} & \hdots & R_{x_{2}x_{t-1}} \\
\hdots & \hdots & \ddots & \hdots \\
R_{ x_{t-1}x_1} & R_{ x_{t-1}x_2} & \hdots & 2 d_{\mu \al}
\end{array}
\right]
\]
is positive semidefinite.
Put
\[P_2:=
\left[
\begin{array}{ccccc}
2d_{\delta \al}-2 d_{\mu \al} & 0 & \hdots & 0 \\
0 & 2d_{\delta \al}-2 d_{\mu \al} & \hdots & 0 \\
\hdots & \hdots & \ddots & \hdots \\
0 & 0 & \hdots & 2d_{\delta \al}-2 d_{\mu \al}
\end{array}
\right].
\]
Then, \[P(x_t|x_{t})=P_1 + P_2. \]
Since $d_{\delta \al}- d_{\mu \al}\geq 0$, $P(x_t|x_t)$ is positive semidefinite
and so is $P$. This proves the claim. \\
 Define 
 \[\Lambda:=\Diag(R_{x_1 x_1}-2d_{\delta \al},\dotsc,R_{x_t x_t}-2 d_{\delta \al}).\]
Then,
 \[R[E,E]  =P+\Lambda.\]
  By Claim 4, $\Lambda$ is positive semidefinite and by the previous claim, $P$
is positive semidefinite. So, $R[E,E]$ is positive semidefinite.
  The proof is complete.
\end{proof}

\subsection{Partitioning $\{3,\dotsc,n\}$}
Let the degree of vertex $1$ be $m$.
We denote the vertex sets of $m$ components of $T\smallsetminus (1)$ 
by \[V_1',V_2,\dotsc,V_m.\]
Assume $2 \in V_1'$.
 Define 
\[V_1:=V_1'\smallsetminus \{2\}.\]

{\bf Illustration} \\
Consider Figure $1$: $T_{16} \smallsetminus (1) $ has three components.
\begin{figure}[!ht]
\centering
\begin{tikzpicture}
 [scale=1.29,auto=center,every node/.style={circle,fill=black!20}] 
\node[vertex] (1) at  (0,0) {$1$};
\node[vertex] (2) at  (-3,0) {$2$};
\node[vertex] (3) at  (-1,0) {$3$}; 
\node[vertex] (4) at  (-2,0) {$4$};  
\node[vertex] (5) at  (-3.8,-0.51) {$5$};  
\node[vertex] (6) at  (-4.6,-01) {$6$};  
\node[vertex] (7) at  (1,0) {$7$}; 

\node[vertex] (8) at  (1.5,-.75) {$8$};
\node[vertex] (9) at  (1.5,.75) {$9$}; 
\node[vertex] (10) at  (0,1) {${10}$};

\node[vertex] (11) at  (-1,1) {${11}$};
\node[vertex] (12) at  (-1,2) {${12}$};
\node[vertex] (13) at  (-1,-1) {${13}$};
\node[vertex] (14) at  (-2,1) {${14}$};
\node[vertex] (15) at  (-3.8,.51) {${15}$};
\node[vertex] (16) at  (-4.6,1) {${16}$};
 
\draw  (1) to (10);
\draw  (1) to (3);
\draw  (3) to (4);
\draw  (1) to (7);
\draw  (5) to (6);
\draw (5) to (2);

\draw  (9) to (7);
\draw  (7) to (8);
\draw (4) to (2);
\draw (11) to (3);
\draw (12) to (11);
\draw (13) to (3);
\draw (14) to (4);
\draw (2) to (15);
\draw (16) to (15);
\end{tikzpicture}
\caption{Tree $T_{16}$ on 16 vertices} \label{fig_2}
\end{figure}
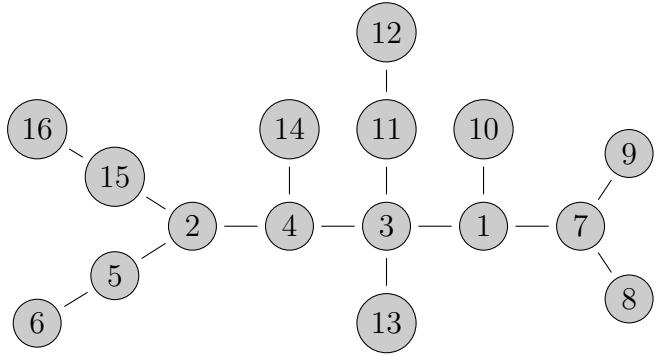
Vertex set of these components are
\[V_1'=\{3,13,11,12,4,14,2,5,6,15,16\},~ V_2=\{10\},~V_3=\{7,9,8\}.\]
Now,
\[ V_1=V_1'\smallsetminus \{2\}=\{3,13,11,12,4,14,5,6,15,16\}.\]

\subsection{A canonical form of $R$}
We now show that $R$ is similar to a block lower triangular matrix. 
\begin{lemma} \label{similar}
$R$ is similar to a block lower triangular matrix with diagonal blocks equal to
$R[V_1,V_1],R[V_2,V_2],\dotsc,R[V_m,V_m]$.
\end{lemma}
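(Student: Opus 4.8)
The plan is to show that $R$ has a block triangular structure relative to the partition $V_1, V_2, \dotsc, V_m$ by verifying that the off-diagonal blocks $R[V_i, V_j]$ with $i < j$ are zero, and then invoke (P3)(a). So the first step is to pick $i \neq j$ with, say, $2 \leq i < j$ in the indexing of the $V$'s (or one of them being the block $V_1$), and take vertices $r \in V_i$ and $s \in V_j$. These lie in distinct connected components of $T \smallsetminus (1)$, so the path $\pp_{rs}$ passes through vertex $1$, giving $d_{rs} = d_{r1} + d_{1s}$. I would substitute this into the defining formula $R_{rs} = -d_{21} + d_{r1} + d_{2s} - d_{rs}$ from Definition \ref{R} and see what survives: $R_{rs} = -d_{21} + d_{r1} + d_{2s} - d_{r1} - d_{1s} = -d_{21} + d_{2s} - d_{1s}$. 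Now if $s \notin V_1'$ (i.e. $s$ is in a component not containing $2$), then $\pp_{2s}$ also passes through $1$, so $d_{2s} = d_{21} + d_{1s}$, and this gives $R_{rs} = 0$.

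The subtlety is that one of the blocks we want to be "above the diagonal" might involve $V_1$, since $2 \in V_1'$ and $1 \notin V_1$. I would handle this by being careful about the ordering: I want $V_1$ to come first so that the blocks $R[V_1, V_j]$ for $j \geq 2$ are the ones required to vanish. For $r \in V_1$ and $s \in V_j$ with $j \geq 2$: the path $\pp_{rs}$ goes through $1$ so $d_{rs} = d_{r1} + d_{1s}$, and also $2 \in V_1'$ while $s \notin V_1'$, so $\pp_{2s}$ goes through $1$ and $d_{2s} = d_{21} + d_{1s}$. Plugging in: $R_{rs} = -d_{21} + d_{r1} + (d_{21} + d_{1s}) - (d_{r1} + d_{1s}) = 0$, as desired. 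For $r \in V_i$, $s \in V_j$ with $2 \leq i < j$, the same computation applies since again $s \notin V_1'$. So in all cases with the block index of $r$ strictly less than that of $s$ (in the order $V_1 < V_2 < \dotsb < V_m$), we get $R_{rs} = 0$.

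Having established $R[V_i, V_j] = O$ whenever $i < j$ in this ordering, I would then apply (P3), part (a), with $\nu = n - 2$, the partition of $\{3, \dotsc, n\}$ into $V_1, V_2, \dotsc, V_m$, and $A = R$: this yields a permutation matrix $P$ with $P' R P$ block lower triangular whose $i$-th diagonal block is $R[V_i, V_i]$. Since $P' R P = P^{-1} R P$, the matrix $R$ is similar to this block lower triangular matrix, which is exactly the claim of Lemma \ref{similar}.

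I do not expect a serious obstacle here — the proof is essentially a direct computation from Definition \ref{R} combined with the fact that removing vertex $1$ disconnects the tree into the components $V_1', V_2, \dotsc, V_m$, so that distances between points in different components route through $1$. The only point requiring care, as noted, is the placement of $V_1$ first in the ordering and remembering that $2 \in V_1'$ (hence $2$ is "attached" to the $V_1$ block rather than being a free index), which is what makes the $d_{2s} = d_{21} + d_{1s}$ identity hold for $s$ outside $V_1'$. One should also note implicitly that some $V_i$ may be a singleton, in which case $R[V_i,V_i]$ is just the $1\times 1$ positive entry $R_{ii}$, but this causes no difficulty in applying (P3).
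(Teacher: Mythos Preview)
Your proposal is correct and follows essentially the same approach as the paper: show $R_{rs}=0$ whenever $r\in V_i$, $s\in V_j$ with $i<j$ by using $d_{rs}=d_{r1}+d_{1s}$ and $d_{2s}=d_{21}+d_{1s}$, then invoke (P3)(a). The paper's version is slightly more streamlined in that it observes directly that $i<j$ forces $j>1$, so $s\notin V_1'$ in every case, avoiding the separate case analysis you carry out for $r\in V_1$ versus $r\in V_i$ with $i\geq 2$.
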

\begin{proof}
We know that 
$$V_1 \cup \cdots \cup V_m=\{3,\dotsc,n\}~~\mbox{and}~~V_i \cap V_j=\emptyset. $$ 
Since  
\[R=[R_{\alpha \beta}]~~ ~3 \leq \alpha, \beta \leq n,\] by item (a) in (P3), 
it suffices to show that if $i<j$, $x \in V_i$ and $y \in V_j$, then
 \[ R_{xy}=0.\]
 By definition,
\begin{equation} \label{rxy1}
R_{xy}=-d_{21}+d_{x1}+d_{2y}-d_{xy}.
\end{equation}
Since $x$ and $y$ belong to different components of $T\smallsetminus (1)$,
\begin{equation} \label{dxy1}
d_{xy}=d_{x1}+d_{y1}.
\end{equation}
Using $(\ref{dxy1})$ in (\ref{rxy1}), 
\begin{equation} \label{rxy2}
\begin{aligned}
R_{xy}&=-d_{21}+d_{x1}+d_{2y}-d_{x1}-d_{y1}\\
&=-d_{21}+d_{2y}-d_{y1}.
\end{aligned}
\end{equation} 
We recall that $2\in V_1'$ and $y\in V_j$. Since $i<j$, we see that $1<j$. Hence, $2$ and $y$ belong to different components of $T\smallsetminus (1).$ Thus,
$d_{2y}=d_{21}+d_{y1}$.
By (\ref{rxy2}), 
$R_{xy}=0$.
The proof is complete.
\end{proof}
The following is immediate.
\begin{corollary}\label{detv2}
\[ \det(R)=  \prod_{i=1}^{m}  \det(R[V_i,V_i]).\]
\end{corollary}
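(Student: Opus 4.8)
The plan is to read the conclusion off directly from Lemma~\ref{similar} together with two standard facts about determinants, so the ``proof'' is essentially a one-line deduction that I would spell out in three short steps. First I would invoke Lemma~\ref{similar} to produce a matrix $M$ that is similar to $R$ and is block lower triangular with diagonal blocks $R[V_1,V_1],\dotsc,R[V_m,V_m]$; inspecting the mechanism behind (P3), the similarity is in fact realized by a permutation matrix $P$, so $M=P'RP$ with $P'P=I$.

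Next I would use that similar matrices have equal determinant: $\det(R)=\det(P'RP)=\det(M)$, since $\det(P')\det(P)=\det(P'P)=\det(I)=1$. (One does not even need the permutation structure for this --- any similarity preserves the determinant --- but it is convenient to note it here.) Then I would apply the block-triangular determinant formula: if $M$ is block lower triangular with square diagonal blocks $B_1,\dotsc,B_m$, then $\det(M)=\prod_{i=1}^{m}\det(B_i)$. This in turn is proved by a quick induction on $m$: writing $M$ in $2\times 2$ block form with top-left block $B_1$, zero top-right block, and lower-right block $M'$ (block triangular with blocks $B_2,\dotsc,B_m$), a Laplace expansion along the first block of rows --- or, more elementarily, successive cofactor expansions down the first $|V_1|$ columns, whose nonzero entries all lie in the first $|V_1|$ rows --- gives $\det(M)=\det(B_1)\det(M')$, and the induction hypothesis finishes it. Chaining the three facts yields $\det(R)=\prod_{i=1}^{m}\det(R[V_i,V_i])$.

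The main obstacle: there really is none of substance, which is exactly why the paper calls this ``immediate.'' The only points deserving a moment's attention are purely bookkeeping: one should make sure that ``similar'' in Lemma~\ref{similar} is used in the determinant-preserving sense (it is --- it comes from conjugation by a permutation matrix as in (P3)), and that the diagonal blocks there are genuinely the principal submatrices $R[V_i,V_i]$ rather than some rearranged versions of them. Both are already built into the statement of Lemma~\ref{similar}, so no extra work is needed.
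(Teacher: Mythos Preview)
Your proposal is correct and matches the paper's approach exactly: the paper simply declares the corollary ``immediate'' from Lemma~\ref{similar}, and your three steps (similarity via a permutation matrix from (P3), invariance of determinant under similarity, and the block-triangular determinant formula) are precisely the standard unpacking of that word.
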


\begin{lemma}\label{detvi}
\[\det(R[V_j,V_j])\geq 0 ~~~~~~j=2,\dotsc,m.\]
\end{lemma}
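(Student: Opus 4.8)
The plan is to read this off directly from Lemma \ref{big} by taking $\al=1$. Fix $j\in\{2,\dotsc,m\}$. First I would check that the hypotheses of Lemma \ref{big} are met with $\al:=1$. Since vertex $1$ is an endpoint of the path $\pp_{12}$, we have $\al\in V(\pp_{12})$. By the construction in the previous subsection, the components of $T\smallsetminus(1)$ are exactly $[V_1'],[V_2],\dotsc,[V_m]$; hence $\widetilde{X}:=[V_j]$ is a connected component of $T\smallsetminus(\al)$. It does not contain $1$ (no component of $T\smallsetminus(1)$ does) and it does not contain $2$ (because $2\in V_1'$ and the $V_i$ are pairwise disjoint). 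So $\widetilde{X}$ is a connected component of $T\smallsetminus(\al)$ containing neither $1$ nor $2$, precisely as required.

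Next I would let $u$ be the vertex of $V_j$ adjacent to $\al=1$; it is unique because $T$ is a tree. I then apply Lemma \ref{big} with $X:=\widetilde{X}$ itself (a connected subgraph of $\widetilde{X}$ containing $u$), so that $E:=V(X)=V_j$. Lemma \ref{big} gives that $R[E,E]=R[V_j,V_j]$ is positive semidefinite. Finally, a positive semidefinite matrix has non-negative determinant, being the product of its (non-negative) eigenvalues, so $\det(R[V_j,V_j])\geq 0$. Running $j$ over $\{2,\dotsc,m\}$ gives the claim.

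Since the substantive work is already contained in Lemma \ref{big}, there is no real obstacle here; the only points that need care are the bookkeeping ones, namely verifying that $V_j\subseteq\{3,\dotsc,n\}$ (so that $R[V_j,V_j]$ is a genuine principal submatrix of $R$) and that $[V_j]$ is connected and avoids both $1$ and $2$ — all of which are immediate from the definition of the $V_i$. Note that $V_1$ is deliberately excluded, as $R[V_1,V_1]$ does not fall under Lemma \ref{big} and will have to be treated separately.
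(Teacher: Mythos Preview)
Your proposal is correct and follows exactly the same approach as the paper: apply Lemma \ref{big} with $\al=1$, $\widetilde{X}=X=[V_j]$, and $E=V_j$, then conclude $\det(R[V_j,V_j])\geq 0$ from positive semidefiniteness. The paper's proof is just the one-line instantiation of Lemma \ref{big}; your version simply spells out the easy verification of its hypotheses.
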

\begin{proof}
Let $j \in \{2,\dotsc,m\}$. 
Put $\widetilde{X}=X=[V_j]$,   $E=
V_j$ and $\al=1$ in Lemma \ref{big}. The result now follows.
\end{proof}

 \subsection{A canonical form of $R[V_1,V_1]$} 
 We partition $V_1$.
Define 
\[V_A:=\{y \in V_1: 2 \notin V(\pp_{1y})\}. \]
\[V_B:=\{y \in V_1: 2 \in V(\pp_{1y})\}.\]
Then,
\[V_{1}=V_{A} \cup V_B~~ \mbox{and}~~ V_A \cap V_B=\emptyset.\]  For example, in Figure \ref{fig_2},
 \[V_A=\{3,4,14,11,13,12\}\mbox{ and } V_B=\{5,6,15,16\}.\]

\begin{lemma}\label{2inpath}
Let $x\in V_B$ and $y\in V_A$. Then $2\in V(\pp_{xy})$.
\end{lemma}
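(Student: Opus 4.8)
\textbf{Proof proposal for Lemma \ref{2inpath}.}

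The plan is to argue purely graph-theoretically, using the elementary fact that in a tree $T$, for pairwise distinct vertices $a,b,c$ one has $c \in V(\pp_{ab})$ if and only if $a$ and $b$ lie in different connected components of $T \smallsetminus (c)$. Everything will reduce to applying this fact with $c = 2$.

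First I would record the trivial membership facts: since $x,y \in V_1 = V_1' \smallsetminus \{2\}$, we have $x \neq 2$ and $y \neq 2$, and since $V_1'$ is a connected component of $T \smallsetminus (1)$ we also have $x \neq 1$ and $y \neq 1$. In particular $2$ is never an endpoint of $\pp_{1x}$, $\pp_{1y}$ or $\pp_{xy}$, which is what lets us apply the separation fact cleanly.

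Next I would translate the two hypotheses into statements about $T \smallsetminus (2)$. From $x \in V_B$ we get $2 \in V(\pp_{1x})$, and since $2$ is an internal vertex of this path (using $1 \neq x$, $1 \neq 2$, $x \neq 2$) the vertices $1$ and $x$ lie in \emph{different} components of $T \smallsetminus (2)$. From $y \in V_A$ we get $2 \notin V(\pp_{1y})$, so the path $\pp_{1y}$ survives in $T \smallsetminus (2)$ and hence $1$ and $y$ lie in the \emph{same} component of $T \smallsetminus (2)$. Now if $x$ and $y$ were in the same component of $T \smallsetminus (2)$, then $x$ and $1$ would be in the same component (through $y$), contradicting the previous sentence. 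Therefore $x$ and $y$ lie in different components of $T \smallsetminus (2)$, and applying the separation fact once more yields $2 \in V(\pp_{xy})$, as claimed.

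There is no genuine obstacle here; the only thing requiring care is confirming that $2$ is an internal (non-endpoint) vertex of $\pp_{1x}$, so that deleting it actually disconnects $1$ from $x$ — this is precisely why one first checks $x \neq 1,2$. An alternative route, if a more ``path-algebraic'' argument is preferred, is to take the median vertex $w$ of $\{1,x,y\}$, decompose $\pp_{1x} = \pp_{1w}\cup\pp_{wx}$ and $\pp_{1y} = \pp_{1w}\cup\pp_{wy}$, note that $V(\pp_{1w}) \subseteq V(\pp_{1y})$ forces $2 \notin V(\pp_{1w})$, hence $2 \in V(\pp_{wx}) \subseteq V(\pp_{xy})$; but the component formulation above is shorter and is the one I would write up.
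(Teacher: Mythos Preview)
Your proof is correct and is essentially the same argument as the paper's, just phrased in terms of connected components of $T\smallsetminus(2)$ rather than as a direct contradiction on path membership; the paper simply assumes $2\notin V(\pp_{xy})$, combines this with $2\notin V(\pp_{1y})$ to conclude $2\notin V(\pp_{1x})$, and obtains the same contradiction with $x\in V_B$. Both routes rest on the identical tree fact you isolate (that $2\in V(\pp_{1x})$ forces $1$ and $x$ into different components of $T\smallsetminus(2)$), so there is no substantive difference.
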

\begin{proof}
Suppose
\begin{equation}\label{xinpxy}
2 \notin V(\pp_{xy}).
\end{equation}
 Since $y \in V_A$,  
\begin{equation} \label{xinpxy2} 
 2 \notin V(\pp_{1y}).
 \end{equation} 
Equations (\ref{xinpxy}) and (\ref{xinpxy2}) imply $2 \notin V(\pp_{1x})$. This contradicts $x \in V_B$. The proof is complete.
\end{proof}

\begin{lemma} \label{vavb}
$R[V_1,V_1]$ similar to a block upper triangular matrix with diagonal blocks equal to
$R[V_A,V_A]$ and $R[V_B,V_B]$.
 \end{lemma}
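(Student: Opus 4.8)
The plan is to mimic the proof of Lemma~\ref{similar}, now carried out inside the index set $V_1$ with the two-block partition $L_1 := V_A$ and $L_2 := V_B$ (recall $V_1 = V_A \cup V_B$ and $V_A \cap V_B = \emptyset$). By part (b) of (P3), it suffices to show that $R_{xy} = 0$ whenever $x \in V_B$ and $y \in V_A$; with that in hand, (P3)(b) supplies a permutation similarity bringing $R[V_1,V_1]$ to block upper triangular form whose first diagonal block is $R[L_1,L_1] = R[V_A,V_A]$ and whose second is $R[L_2,L_2] = R[V_B,V_B]$, which is exactly the assertion. (If $V_A$ or $V_B$ is empty there is nothing to prove.)

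To establish the vanishing condition, I would fix $x \in V_B$, $y \in V_A$ and start from the definition
\[ R_{xy} = -d_{21} + d_{x1} + d_{2y} - d_{xy}. \]
Because $x \in V_B$, the vertex $2$ lies on $\pp_{1x}$, and since $T$ is a tree this forces $d_{x1} = d_{x2} + d_{21}$. By Lemma~\ref{2inpath}, the vertex $2$ also lies on $\pp_{xy}$, hence $d_{xy} = d_{x2} + d_{2y}$. Substituting these two identities into the displayed formula, the four remaining terms cancel in pairs, giving $R_{xy} = 0$.

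Finally, invoking (P3)(b) with the partition $\{V_A, V_B\}$ of $V_1$ gives the result. The only real content is the distance bookkeeping in the middle step, and both inputs it uses — that $2 \in V(\pp_{1x})$ (immediate from the definition of $V_B$) and that $2 \in V(\pp_{xy})$ (Lemma~\ref{2inpath}) — are already available, so I do not anticipate a genuine obstacle here. This lemma is the $V_1$-analogue of Lemma~\ref{similar}, the only difference being that the relevant zero block now sits below the diagonal, so we land in case (b) rather than case (a) of (P3).
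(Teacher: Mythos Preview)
Your proposal is correct and follows essentially the same argument as the paper: both reduce via (P3)(b) to showing $R_{xy}=0$ for $x\in V_B$, $y\in V_A$, and both derive this from the definition of $R_{xy}$ together with $d_{x1}=d_{x2}+d_{21}$ (since $x\in V_B$) and $d_{xy}=d_{x2}+d_{2y}$ (Lemma~\ref{2inpath}). The only difference is the order in which the two distance identities are substituted.
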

\begin{proof}
Let $x\in V_B$ and $y\in V_A$. In view of item (b) in (P3), it suffices to show that $R_{xy}=0$. 

By the previous Lemma 
\begin{equation}\label{xy1}
d_{2y}+ d_{2x} =d_{xy}.
\end{equation}
 By Definition \ref{R},
 \begin{equation}\label{xy2}
 R_{xy}=-d_{21}+d_{x1}+d_{2y}-d_{xy}.
 \end{equation}
 As $x\in V_B$,
 \begin{equation}\label{xy3}
  d_{1x}- d_{21} =d_{2x}.
  \end{equation}
 By (\ref{xy2}) and (\ref{xy3}),
 \[R_{xy}=d_{2x}+d_{2y}-d_{xy}.\]
 Equation (\ref{xy1}) now gives
 \[R_{xy}=0.\]
 The proof is complete.
\end{proof} 
 The following is now immediate.
 
 \begin{corollary}\label{detv1}
 \[\det(R[V_1,V_1])=\det(R[V_A,V_A]) \det(R[V_B,V_B]).\]
 \end{corollary}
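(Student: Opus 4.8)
The plan is to read the corollary directly off Lemma \ref{vavb}. Since $V_1 = V_A \cup V_B$ with $V_A \cap V_B = \emptyset$, view $R[V_1,V_1]$ as a matrix indexed by $V_1$ and take the two blocks of the partition to be $L_1 := V_A$ and $L_2 := V_B$. Lemma \ref{vavb} (which applies item (b) of (P3) to exactly this ordered partition, using $R_{xy}=0$ for all $x \in V_B$, $y \in V_A$) supplies a permutation matrix $P$ such that $P'R[V_1,V_1]P$ is block upper triangular with diagonal blocks $R[V_A,V_A]$ and $R[V_B,V_B]$.

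First I would use the elementary fact that the determinant of a block triangular matrix is the product of the determinants of its diagonal blocks, giving $\det(P'R[V_1,V_1]P) = \det(R[V_A,V_A])\,\det(R[V_B,V_B])$. Next, since $P$ is a permutation matrix we have $\det(P')\det(P) = 1$ (equivalently, similar matrices share the same determinant), so $\det(R[V_1,V_1]) = \det(P'R[V_1,V_1]P)$. Chaining these two identities yields the stated equality. (If one of $V_A$, $V_B$ happens to be empty, the corresponding determinant is the empty product $1$ and the identity is trivial.)

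I do not expect any genuine obstacle here: all the mathematical work has already been done in Lemma \ref{vavb}, and ultimately in Lemma \ref{2inpath} together with the distance-identity computations that force the relevant block of $R$ to vanish. The present statement is pure bookkeeping — ``block triangular determinant $=$ product of block determinants'' plus similarity-invariance of the determinant. The only point to keep straight is that the blocks must be ordered with $V_A$ first, so that the zero block $R[V_B,V_A] = O$ sits strictly below the diagonal, matching the hypothesis of (P3)(b); this ordering is already built into the statement of Lemma \ref{vavb}.
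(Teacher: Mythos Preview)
Your proposal is correct and matches the paper's approach exactly: the paper states this corollary as immediate from Lemma \ref{vavb}, and you have simply spelled out the standard reasoning (block triangular determinant equals the product of the diagonal block determinants, plus similarity-invariance) that the paper leaves implicit. There is nothing to add or correct.
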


 \subsection{Partitioning $V_A$}
We partition $V_A$.  
 Let $\pp_{12}$ be the path with vertices 
 $\{1,u_1,\dotsc,u_q,2\}$ and edges
 \[(1,u_1),  (u_1,u_2),\dotsc,(u_q,2).\] 
Define
\[U_i:= \{y\in V_A: d_{u_i y} \leq d_{u_j y}~~~\mbox{for all}~~i\neq j\}~~~i=1,\dotsc,q.\]
 
(We can think  of $U_i$ as the collection of vertices in $V_A$ which are nearer to $u_i$ than $u_j$.)
Clearly 
$u_i \in U_i$.
To illustrate, consider Figure \ref{fig_2}. Here,
\[u_1=3,~U_1=\{3,11,12,13\}, ~u_2=4, ~U_2=\{4,14\}.\]

\begin{lemma}\label{yui}
The following items hold.
\begin{enumerate}
\item[ {\rm (i)}] If $y \in U_i$, then $u_i \in  V(\pp_{y2}) \cap V(\pp_{y1})$.
\item[{\rm (ii)}] If $y \in U_i$, then $u_i \in  V(\pp_{yu_j})$ for $j\neq i$.
\item[{\rm (iii)}] $U_{1},\dotsc,U_q$ partition $V_A$.
\item[{\rm (iv)}]  Let $y \in U_i$ and $z \in U_j$. If $i \neq j$, then 
\[\pp_{yz}=\pp_{yu_i} \cup \pp_{u_i u_{j}} \cup \pp_{u_j z}. \]
\item[{\rm (v)}] Each $[U_i]$ is  a tree. 
\end{enumerate}
\end{lemma}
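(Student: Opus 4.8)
The plan is to reduce all five items to a single piece of bookkeeping. For $y\in V_A$, let $b(y)$ be the vertex of $V(\pp_{12})$ nearest to $y$; equivalently, since $V(\pp_{1y})\cap V(\pp_{12})$ is an initial segment of $\pp_{12}$, $b(y)$ is the last vertex of $\pp_{12}$ lying on $\pp_{1y}$. First I would check that $b(y)\in\{u_1,\dots,u_q\}$: it is not $2$, since $2\notin V(\pp_{1y})$ by the definition of $V_A$, and it is not $1$, since the unique neighbour of $1$ inside the component $V_1'$ is $u_1$, so $u_1$ lies on every path from $1$ into $V_1'\supseteq V_A$. (If $1$ and $2$ are adjacent, that is, $q=0$, this same remark gives $V_A=\emptyset$ and the lemma is vacuous, so assume $q\ge1$.) Writing $b(y)=u_k$, the path decomposes as $\pp_{1y}=\pp_{1u_k}\cup\pp_{u_ky}$, where $\pp_{u_ky}$ meets $\pp_{12}$ only at $u_k$; in particular $\pp_{u_ky}$ contains no $u_j$ with $j\ne k$.

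The next step links $b$ to the sets $U_i$. For any $u_j\in V(\pp_{12})$, since $\pp_{yu_k}$ meets $\pp_{12}$ only at $u_k$ while $\pp_{u_ku_j}\subseteq\pp_{12}$, the concatenation $\pp_{yu_k}\cup\pp_{u_ku_j}$ is a genuine path and hence equals $\pp_{yu_j}$; therefore $d_{yu_j}=d_{yu_k}+d_{u_ku_j}\ge d_{yu_k}$, with strict inequality when $j\ne k$ because all edge weights are positive. Thus $y\in U_k$ and $y\notin U_j$ for $j\ne k$, so that $b(y)=u_i$ if and only if $y\in U_i$; as $b$ is defined on all of $V_A$, this is exactly item (iii). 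The decomposition $\pp_{yu_j}=\pp_{yu_i}\cup\pp_{u_iu_j}$ for $y\in U_i$, $j\ne i$, is item (ii); and appending to $\pp_{yu_i}$ instead the subpaths $\pp_{u_i1}$ or $\pp_{u_i2}$ of $\pp_{12}$ produces the paths $\pp_{y1}$ and $\pp_{y2}$, each of which therefore runs through $u_i$, giving item (i).

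For item (iv), take $y\in U_i$, $z\in U_j$ with $i\ne j$; the three pieces $\pp_{yu_i}$, $\pp_{u_iu_j}$, $\pp_{u_jz}$ meet pairwise only at the common endpoints $u_i$ and $u_j$, with the single exception of a possible vertex in $V(\pp_{yu_i})\cap V(\pp_{u_jz})$. But such a vertex $v$ would satisfy $b(v)=u_i$ when read off from $\pp_{1v}=\pp_{1u_i}\cup\pp_{u_iv}$ and $b(v)=u_j$ when read off from $\pp_{1v}=\pp_{1u_j}\cup\pp_{u_jv}$, contradicting $i\ne j$; hence the union $\pp_{yu_i}\cup\pp_{u_iu_j}\cup\pp_{u_jz}$ is a path joining $y$ and $z$ and equals $\pp_{yz}$. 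For item (v), $[U_i]$ is acyclic as an induced subgraph of $T$, so it suffices to show it is connected: for $y\in U_i$, every vertex $v$ on $\pp_{yu_i}$ satisfies $v\in V_1$, $2\notin V(\pp_{1v})$, and $b(v)=u_i$ (again by reading $b(v)$ off the decomposition $\pp_{1v}=\pp_{1u_i}\cup\pp_{u_iv}$), so $v\in U_i$; thus $V(\pp_{yu_i})\subseteq U_i$ and every $y\in U_i$ is joined to $u_i$ within $[U_i]$.

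I do not expect a serious difficulty: the whole lemma is elementary surgery on paths in a tree. The one point requiring care is making ``the vertex of $\pp_{12}$ nearest to $y$'' well-defined and then leaning on its \emph{uniqueness} — that uniqueness is precisely what excludes the stray intersection in (iv) and confines the path $\pp_{yu_i}$ to $U_i$ in (v) — together with the systematic use of positivity of the edge weights to turn ``$\le$'' into ``$<$''. Everything else is just concatenating subpaths along $\pp_{12}$.
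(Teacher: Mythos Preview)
Your argument is correct. The introduction of the ``basepoint'' map $b:V_A\to\{u_1,\dots,u_q\}$, sending $y$ to the unique vertex of $\pp_{12}$ nearest to $y$, and the identification $y\in U_i\iff b(y)=u_i$, do all the work for (i)--(iii); the path-concatenation checks for (iv) and the observation that $b$ is constant along $\pp_{yu_i}$ for (v) are carried out carefully. One small point worth making explicit in (v) is that the vertices of $\pp_{yu_i}$ lie in $V_1'$ (since $[V_1']$ is a subtree containing both endpoints) --- you assert $v\in V_1$ but this is where it comes from; otherwise the argument is clean.

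The paper proceeds differently: it argues each item more or less on its own, proving (i) by contradiction (if $u_i\notin V(\pp_{1y})$ then $u_{i-1}$ is strictly closer to $y$), deriving (ii) from (i), disjointness in (iii) from (ii), and handling (v) by a three-way case split ruling out $x\in U_j$ for $j\ne i$, $x=2$, and $x\in V_B$ separately. Your approach is more economical --- the single invariant $b$ and its uniqueness collapse the case analysis in (v) and make (iii) and (iv) immediate --- while the paper's approach is more hands-on but ends up re-deriving similar path decompositions in several places.
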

\begin{proof}
We prove (i) now. 
Assume that $u_i \notin V(\pp_{1y})$. 
Because $u_1$ is the only vertex in $V_1$ adjacent to $1$, $i\neq 1$. 
So, $u_i \notin V(\pp_{u_1y})$.  
Then, $\pp_{u_1y} \cup \pp_{u_1u_i}$ contains $\pp_{yu_i}$.
Now, $u_{i-1} \in V(\pp_{yu_i})$. This implies 
\[d_{u_{i-1}y}<d_{u_iy}.\] But this cannot happen as $y \in U_i$.
So,
\[u_i \in V(\pp_{1y}). \]
Using a similar argument,   \[u_i \in V(\pp_{2y}).\]
 This proves (i).

Let $j > i$. 
 By (i) and from the definition of $u_i$ and $u_{j}$,
    \[u_i \in V(\pp_{2y})~~ \mbox{and}~~
 u_j \in V(\pp_{2u_i}).\]
Thus, 
\[\pp_{2y} =\pp_{2u_{j}} \cup \pp_{u_j u_i} \cup \pp_{u_i y}.\] 
The above equation implies
\[ u_i \in V(\pp_{yu_j})~~\mbox{for all}~j>i. \]
A similar argument leads to
\[ u_i \in V(\pp_{yu_j})~~\mbox{for all}~j<i. \] 
 The proof of (ii) is complete.
 
 Let $y \in U_{i } \cap U_j $, where $j \neq i$. By (ii), it now follows that
 \[u_{i} \in V(\pp_{u_j y}) ~~\mbox{and}~~u_{j} \in V(\pp_{u_i y}).\]
As these two cannot happen simultaneously, $y \notin U_i \cap U_j$. Thus,
\[U_i \cap U_j=\emptyset. \]
By definition of $U_1,\dotsc,U_q$, 
\[U_1 \cup \cdots \cup U_q  \subseteq V_A. \]
Let $x \in V_A$ and $k \in \{1,\dotsc,q\}$ be such that
\[ d_{xu_k}:=\min(d_{xu_1},\dots,d_{x u_q}).\]
Then, $x \in U_k$. Hence
\[V_A \subseteq U_1 \cup \cdots \cup U_q. \]
So,
\[V_A= U_1 \cup \cdots \cup U_q. \]
The proof of (iii) is complete.

The proof of (iv) follows from (ii).

We now show that $[U_i]$ is a tree.
Let $y \in U_i$.  
Since $y,u_i \in V_1'$ and $[V_1']$ is a tree, we have
\begin{equation} \label{vqq}
V(\pp_{yu_i})\subseteq V_1'.
\end{equation}
To show that $[U_i]$ is a tree, it now suffices to show that  $V(\pp_{yu_i})\subseteq U_i$.
Let $x \in V(\pp_{yu_i})$.
Assuming 
  $x \notin U_i$, we shall get a contradiction. By (\ref{vqq}), we now have only three cases:
\[ \mbox{(a)}~ x \in U_j ~\mbox{for some}~ j \neq i~~\mbox{(b)}~x=2~~\mbox{(c)}~x \in V_B.\]
  
  Assume (a).
In view of  item (iv) above, we get 
$u_i \in V(\pp_{xy})$.
But then $x \notin V(\pp_{yu_i})$.
This is a contradiction. So, (a) is not true.

If (b) is true, then $2 \in  V(\pp_{yu_i})$. However (i) implies $u_i \in  V(\pp_{y2})$. This is a contradiction.

If we assume (c), then  $x \in  V(\pp_{yu_i})$. By  Lemma \ref{2inpath}, $2 \in  V(\pp_{yx})$ and therefore
$2 \in  V(\pp_{yu_i})$ implying case (b) is true which is a
contradiction. 

Hence, $V(\pp_{yu_i}) \subseteq U_i$. So, $[U_i]$ is a tree. This completes the proof.
\end{proof}

\subsection{A canonical form of $R[V_A,V_A]$}
We now show that $R[V_A,V_A]$ is similar to a block upper triangular matrix.
\begin{lemma} \label{similarVA}
$R[V_A,V_A]$ is similar to a block upper triangular matrix with $i^{\rm th}$ diagonal block equal to $R[U_i,U_i]$.
\end{lemma}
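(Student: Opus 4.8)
The plan is to mirror the structure of Lemma \ref{similar} and Lemma \ref{vavb}: show that the matrix $R[V_A,V_A]$, after reordering the index set $V_A = U_1 \cup \cdots \cup U_q$ in the order $U_1, U_2, \dotsc, U_q$, is block triangular, and then invoke the appropriate part of (P3). Concretely, I would fix $i < j$, take $y \in U_i$ and $z \in U_j$, and prove that $R_{yz} = 0$; by item (b) of (P3) this forces $R[V_A,V_A]$ to be similar to a block upper triangular matrix whose diagonal blocks are exactly the $R[U_i,U_i]$. (One should double-check the orientation: the proof of Lemma \ref{similar} used item (a) with the roles of $\alpha,\beta$ giving a lower triangular form, and Lemma \ref{vavb} used item (b) for an upper triangular form; here, since $U_i$ consists of vertices nearer $u_i$ and $u_i$ lies closer to $1$ for smaller $i$, the zero pattern should be the one matching (b), i.e. $R_{yz}=0$ when $y\in U_i$, $z\in U_j$, $i<j$.)

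First I would recall the definition $R_{yz} = -d_{21} + d_{y1} + d_{2z} - d_{yz}$ for $y \neq z$. The key geometric input is Lemma \ref{yui}: by item (iv), for $y \in U_i$ and $z \in U_j$ with $i \neq j$ we have the path decomposition $\pp_{yz} = \pp_{yu_i} \cup \pp_{u_iu_j} \cup \pp_{u_jz}$, so $d_{yz} = d_{yu_i} + d_{u_iu_j} + d_{u_jz}$. I also need to express $d_{y1}$ and $d_{2z}$ through $u_i$ and $u_j$: item (i) of Lemma \ref{yui} gives $u_i \in V(\pp_{y1})$ and $u_j \in V(\pp_{z2})$, so $d_{y1} = d_{yu_i} + d_{u_i1}$ and $d_{2z} = d_{2u_j} + d_{u_jz}$. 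Finally, since $u_i, u_j$ both lie on $\pp_{12}$ with $u_i$ between $1$ and $u_j$ (as $i < j$), we have $d_{21} = d_{2u_j} + d_{u_ju_i} + d_{u_i1}$. Substituting all of these into the formula for $R_{yz}$,
\[
R_{yz} = -\bigl(d_{2u_j}+d_{u_ju_i}+d_{u_i1}\bigr) + \bigl(d_{yu_i}+d_{u_i1}\bigr) + \bigl(d_{2u_j}+d_{u_jz}\bigr) - \bigl(d_{yu_i}+d_{u_iu_j}+d_{u_jz}\bigr),
\]
and every term cancels, giving $R_{yz} = 0$. Applying item (b) of (P3) with the partition $U_1, \dotsc, U_q$ of $V_A$ then yields the claim.

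I do not expect a serious obstacle here; the lemma is essentially bookkeeping once Lemma \ref{yui} is in hand. The one point requiring care is getting the triangular orientation and the index inequality right: I must verify that the vanishing entries are precisely those indexed by $(y,z)$ with $y \in U_i$, $z \in U_j$, $i < j$ (and that the non-vanishing structure is consistent with a single permutation putting the blocks in order), so that exactly one of items (a)/(b) of (P3) applies. A secondary subtlety is that (P3) is stated for a partition of $\{1,\dotsc,\nu\}$ with a relabeling; since $V_A$ is itself an arbitrary subset of $\{3,\dotsc,n\}$, one should note that $R[V_A,V_A]$ is a $\lvert V_A\rvert \times \lvert V_A\rvert$ matrix with rows and columns indexed by $V_A = U_1 \cup \cdots \cup U_q$, so (P3) applies verbatim to this submatrix. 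With these checks, the proof is complete.
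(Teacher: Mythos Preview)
Your approach is the same as the paper's, but you have the index inequality backwards, and as a result your cancellation claim is false. You take $y\in U_i$, $z\in U_j$ with $i<j$ and assert $R_{yz}=0$; the paper instead takes $r\in U_i$, $s\in U_j$ with $i>j$ and shows $R_{rs}=0$, which is what item (b) of (P3) actually requires for block \emph{upper} triangular form (zeros below the block diagonal).

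Concretely, with $i<j$ your substitution gives
\[
R_{yz}=-(d_{2u_j}+d_{u_ju_i}+d_{u_i1})+(d_{yu_i}+d_{u_i1})+(d_{2u_j}+d_{u_jz})-(d_{yu_i}+d_{u_iu_j}+d_{u_jz})=-2d_{u_iu_j},
\]
not $0$: the two copies of $d_{u_iu_j}$ carry the same sign, so they do not cancel. A quick sanity check on the path $1\!-\!3\!-\!4\!-\!2$ (all weights $1$, $u_1=3$, $u_2=4$) gives $R_{34}=-3+1+1-1=-2$ while $R_{43}=-3+2+2-1=0$, confirming that the vanishing direction is row index from the larger $U_i$.

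The fix is exactly the point you flagged as ``requiring care'': take $r\in U_i$, $s\in U_j$ with $i>j$. Then $u_i$ lies between $u_j$ and $2$, so $d_{2u_j}=d_{2u_i}+d_{u_iu_j}$, and the same substitutions now give $R_{rs}=-d_{21}+d_{u_i1}+d_{2u_j}-d_{u_iu_j}=-d_{u_i2}+d_{2u_i}=0$. With that correction your argument matches the paper's.
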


\begin{proof}
 Let $i>j$. Pick any two elements $r \in U_i$ and $s \in U_j$. 
 By item (c) in (P3), it suffices to show that
 \[R_{rs}=0. \]
We recall that
\begin{equation}\label{Rrs} 
R_{rs}=-d_{21}+d_{r1}+d_{2s}-d_{rs}.
\end{equation}
By item (i) and (iv) of Lemma \ref{yui}, 
\begin{equation} \label{r1}
d_{r1}=d_{ru_i}+d_{u_i 1},~~ d_{2s}=d_{2u_j}+d_{u_j s}~~\mbox{and}~~ d_{rs}= d_{ru_i}+d_{u_ju_i}+d_{su_j}.  
\end{equation}
Thus   (\ref{Rrs}) and (\ref{r1})   give
\begin{align*} 
 R_{rs}&=-d_{21}+d_{ru_i}+d_{u_i 1}+d_{2u_j}+d_{u_j s}-(d_{ru_i}+d_{u_ju_i}+d_{su_j}      )\\&=-d_{21}+d_{u_i 1}+d_{2u_j}-d_{u_ju_i}.
 \end{align*}
Since $i>j$ and $\pp_{12}$ has edges $(u_{k},u_{k+1})$,
\[-d_{21}+d_{u_i1}=-d_{u_i2}~~\mbox{and}~~d_{2u_j} - d_{u_j u_i}=d_{2u_i}. \]
Thus, $R_{rs}=0$.
This completes the proof.
\end{proof}

The following is immediate now.
\begin{corollary}\label{detva}
\[\det(R[V_A,V_A])= \prod_{i=1}^{q} \det(R[{U_i},{U_i}]).\]
\end{corollary}

\subsection{Computation of $\det(R[U_i,U_i])$}
Fix $i \in \{1,\dotsc,q\}$.
We further partition $U_i$ into disjoint sets.
Let $u_i$ have $p_i$ adjacent vertices in $[U_i]$. 
Then, $[U_i] \smallsetminus (u_i)$ have $p_i$ components:
\[G_{i1},\dotsc,G_{ip_i}.\]
Define $Q_{ik}:=V(G_{ik})$.
For example in Figure \ref{fig_2}, for $i=1$, we have
  $$u_1=3,~U_1=\{12,11,3,13\}.$$ There are two components in $[U_1]\smallsetminus (3)$. The vertices of these components are
\[Q_{11}=\{12,11\}~\mbox{and}~ Q_{12}=\{13\}.\]

\begin{lemma} \label{u1u1}
Th following items hold.
\begin{enumerate}
\item[{\rm (i)}] $\det(R[U_i,U_i])=R_{u_i u_i}(\prod \limits_{k =1}^{p_i} \det(R[Q_{ik},Q_{ik}])) $.
\item[{\rm (ii)}] $G_{i1},\dotsc,G_{ip_i}$ are connected components of $T\smallsetminus (u_i)$.
\item[\rm (iii)] \(\det(R[U_i,U_i])\geq 0\).
\end{enumerate}
\end{lemma}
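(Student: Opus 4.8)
The plan is to prove items (i) and (ii) independently and then combine them to get (iii).

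For item~(i), I would first show that inside $R[U_i,U_i]$ the row and column indexed by $u_i$ are zero off the diagonal, and that for $x\in Q_{ik}$, $y\in Q_{il}$ with $k\ne l$ one has $R_{xy}=0$. The first of these is a short computation: expanding $R_{u_iy}$ (and $R_{yu_i}$) by Definition~\ref{R} and using Lemma~\ref{yui}(i) to write $d_{2y}=d_{2u_i}+d_{u_iy}$ and $d_{1y}=d_{1u_i}+d_{u_iy}$ makes the term $d_{u_iy}$ cancel, and then the relation $d_{21}=d_{1u_i}+d_{u_i2}$ (valid since $u_i\in V(\pp_{12})$) kills the rest. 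For the second, the key observation is that $[U_i]$ is a tree by Lemma~\ref{yui}(v), so $\pp_{xy}$ lies inside $[U_i]$, and since $x$ and $y$ belong to different components of $[U_i]\setminus(u_i)$ this path passes through $u_i$, giving $d_{xy}=d_{xu_i}+d_{u_iy}$; splitting $d_{x1}$ and $d_{2y}$ at $u_i$ (Lemma~\ref{yui}(i)) and using $d_{21}=d_{1u_i}+d_{u_i2}$ again makes $R_{xy}$ telescope to $0$. Since $\{u_i\},Q_{i1},\dots,Q_{ip_i}$ partition $U_i$ and we have shown every entry of $R[U_i,U_i]$ lying between two distinct blocks of this partition vanishes, (P3)(c) gives that $R[U_i,U_i]$ is similar to the block-diagonal matrix with diagonal blocks $[R_{u_iu_i}],R[Q_{i1},Q_{i1}],\dots,R[Q_{ip_i},Q_{ip_i}]$, which is exactly the determinant identity in~(i).

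For item~(ii), I would prove the stronger fact that every neighbour in $T$ of a vertex of $U_i\setminus\{u_i\}$ again lies in $U_i$; granting this, a vertex of $Q_{ik}$ has no $T$-neighbour outside $Q_{ik}$ other than $u_i$, so the connected set $Q_{ik}$ is a full connected component of $T\setminus(u_i)$. To establish the stronger fact, write $V(T)=\{1,2\}\cup U_1\cup\dots\cup U_q\cup V_B\cup V_2\cup\dots\cup V_m$ and rule out all the wrong possibilities for a neighbour $w$ of $y\in U_i\setminus\{u_i\}$: $w\notin U_j$ for $j\ne i$ because by Lemma~\ref{yui}(iv) the path $\pp_{yw}$ passes through both $u_i$ and $u_j$ and hence (using $y\ne u_i$) has length at least two; $w\notin V_B$ by Lemma~\ref{2inpath}; $w\ne1$ and $w\ne2$ because a vertex of $V_1'$ adjacent to $1$ (resp.\ $2$) must be $u_1$ (resp.\ $u_q$), which would force $w=u_i$; and $w\notin V_2\cup\dots\cup V_m$ since $y$ and such a $w$ would lie in different components of $T\setminus(1)$. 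Hence $w\in U_i$.

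For item~(iii), I would combine the two: by (i), $\det(R[U_i,U_i])=R_{u_iu_i}\prod_{k=1}^{p_i}\det(R[Q_{ik},Q_{ik}])$, and $R_{u_iu_i}>0$ by the remark following Definition~\ref{R}. For each $k$, item~(ii) says $G_{ik}$ is a connected component of $T\setminus(u_i)$; it contains neither $1$ nor $2$ because $Q_{ik}\subseteq U_i\subseteq V_A\subseteq V_1'\setminus\{2\}$ while $1\notin V_1'$; and $u_i\in V(\pp_{12})$. So Lemma~\ref{big}, applied with $\al=u_i$, $\widetilde{X}=X=G_{ik}$ and $E=Q_{ik}$, shows $R[Q_{ik},Q_{ik}]$ is positive semidefinite, whence $\det(R[Q_{ik},Q_{ik}])\ge0$; therefore $\det(R[U_i,U_i])\ge0$. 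The step I expect to be the main obstacle is item~(ii): it is the one place where one has to trace adjacencies across the whole tree, combining Lemmas~\ref{yui} and~\ref{2inpath} with the component structure of $T\setminus(1)$ and the geometry of $\pp_{12}$; the rest is short telescoping cancellations together with citations of (P3) and Lemma~\ref{big}.
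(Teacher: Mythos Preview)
Your proposal is correct and follows essentially the same route as the paper: the same telescoping cancellations for (i), the same case-by-case elimination of possible neighbours for (ii), and the same invocation of Lemma~\ref{big} with $\al=u_i$ for (iii). Two small slips in your sketch of (ii) are worth fixing: ``would force $w=u_i$'' should read ``would force $y=u_i$'' (it is $y$, not its neighbour $w$, that is pinned down as $u_1$ or $u_q$), and ``a vertex of $V_1'$ adjacent to $2$ must be $u_q$'' is false as stated since vertices of $V_B$ can be adjacent to $2$---what you need, and what you actually use since $y\in V_A$, is that a vertex of $V_A$ adjacent to $2$ must be $u_q$.
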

\begin{proof}
Let
$a \in Q_{ir}$, $b \in Q_{is}$ and $r \neq s$.
By definition,
\[R_{ab}=-d_{2 1}+d_{a 1}+d_{2b}-d_{ab}.\]
Since  $u_i \in V(\pp_{12})$,
\begin{equation}\label{zrzs}
 R_{ab}=-d_{2u_i}-d_{u_i1}+d_{a 1}+d_{2b}-d_{ab}.
 \end{equation}
As $a \in U_i$, it follows from item (i) of  Lemma \ref{yui} that 
\begin{equation}\label{zr1}
d_{au_i}=d_{a 1}-d_{1u_i}.
\end{equation}
Using (\ref{zr1}) in (\ref{zrzs}), 
\begin{equation}\label{zrzs2}
 R_{ab}=-d_{2u_i}+d_{u_i a}+d_{2b}-d_{ab}.
 \end{equation}
As $b\in U_i$, it follows from item (i) of Lemma \ref{yui} that 
\begin{equation}\label{zr2}
d_{bu_i}=d_{2b}-d_{2u_i}.
\end{equation}
Using (\ref{zr2}) in (\ref{zrzs2}), 
\begin{equation}\label{zrzs3}
 R_{ab}=d_{bu_i}+d_{u_i a}-d_{ab}.
 \end{equation}
 Finally, since $a$ and $b$ belong to different components of $[U_i]\smallsetminus (u_i)$,
 
 \begin{equation}\label{zr3}
 d_{bu_i}+d_{u_i a}=d_{ab}.
 \end{equation}
 Using  (\ref{zr3}) in (\ref{zrzs3}) 
 \[R_{ab}=0.\] 
We now show that $R_{u_i x}=0$ for any $x \in Q_{is}$. By definition,
 \[R_{u_i x}=-d_{21}+d_{u_i 1}+d_{2x}-d_{u_i x}.\]
 Since  $u_i$ lies  on $\pp_{12}$,
 \begin{equation}\label{zrzs5}
  R_{u_i x}=-d_{u_i 2}+d_{2x}-d_{u_i x}.
  \end{equation}
  As $x\in Q_{is}\subset U_i$, by part (i) of  Lemma \ref{yui},
\begin{equation}\label{zr4}
d_{xu_i}+d_{2u_i}=d_{2x}.
\end{equation}
  By (\ref{zrzs5}) and (\ref{zr4}), 
  \[R_{u_i x}=0.\]
  Similarly, \[R_{xu_i}=0.\]
By item (c) in (P3), we now conclude that $R[U_i,U_i]$ is similar to a block diagonal matrix with diagonal blocks 
  \[R_{u_i u_i},~~R[Q_{ik},Q_{ik}]~~~~~k=1,\dotsc,p_i.\]
  Hence 
\[\det(R[U_i,U_i])=R_{u_i u_i}(\prod_{k =1}^{p_i} \det(R[Q_{ik},Q_{ik}])) .\]
   This completes the proof of (i).
  
By definition $G_{i1},\dotsc,G_{ip_i}$ are connected components of $[U_i] \smallsetminus (u_i)$. So,
each $G_{ik}$ is connected. Suppose $G_{ik}$ is not a connected component of $T \smallsetminus (u_i)$. Then, there 
exists $v \in V(T) \smallsetminus \{u_i\}$ but not in $Q_{ik}$ such that $v$ is adjacent to a vertex $g \in Q_{ik}$. 

Suppose $v \in Q_{ij}$ for some $j \neq k$. But $Q_{ik}$ and $Q_{ij}$ are components of
$[U_i] \smallsetminus (u_i)$ and hence $u_i \in V(\pp_{gv})$. This is not possible. 

Suppose $v \in U_j$ where $j\neq i$. Then, item (iv) in Lemma \ref{yui} implies $u_i \in V(\pp_{gv})$. This is not possible. 

Suppose $v \in V_B$. Then, Lemma \ref{2inpath} gives $2 \in V(\pp_{vg})$.  Again, this is not possible. 

Let $v \in V_2 \cup \cdots \cup V_m$. Since $g\in V_1$, $1\in \pp_{gv}$.
This is a contradiction.
Thus, $G_{ik}$ is a connected component of $T \smallsetminus (u_i)$. The proof of (ii) is complete.

Fix $k \in \{1,\dotsc,p_i\}$. We use Lemma \ref{big}. Set $\tilde{X}=X=G_{ik}$,  $E=Q_{ik}$ and $\al=u_i$. 
By (ii), $X$ is a connected component of $T \smallsetminus (u_i)$. Hence
$\det(R[Q_{ik},Q_{ik}]) \geq 0$. In view of item (i), $\det(R[U_i,U_i]) \geq 0$.
The proof is complete.
\end{proof}
From Corollary \ref{detva} and Lemma  \ref{u1u1}, we get the following.
\begin{lemma}\label{detvageq}
\[\det(R[V_A,V_A])\geq 0.\]
\end{lemma}
\subsection{ Computation of $\det(R[V_B,V_B])$}
Let $[V_B]$ have $s$ components and let the vertex sets of these components be $$W_1,\dotsc,W_s.$$

For example in Figure \ref{fig_2}, $[V_B]$  has two components
\[W_1=\{5,6\},~W_2=\{15,16\}.\]
\begin{lemma}\label{wcomp}
Let $z_i \in W_i$  and  $z_j \in W_j$ where $i \neq j$.
Then $2 \in V(\pp_{z_iz_j})$.
\end{lemma}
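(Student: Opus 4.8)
\textbf{Proof proposal for Lemma \ref{wcomp}.}

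The plan is to argue by contradiction, mirroring the structure of Lemma \ref{2inpath}. Suppose $2 \notin V(\pp_{z_i z_j})$ for some $z_i \in W_i$ and $z_j \in W_j$ with $i \neq j$. First I would record what membership in $V_B$ buys us: since $z_i, z_j \in V_B \subseteq V_1$, both lie in the component $V_1'$ of $T \smallsetminus (1)$, and by the definition of $V_B$ we have $2 \in V(\pp_{1 z_i})$ and $2 \in V(\pp_{1 z_j})$. The key combinatorial fact I would invoke is the standard tree identity that for any three vertices $a, b, c$ of a tree, the three paths $\pp_{ab}, \pp_{bc}, \pp_{ac}$ share a common ``median'' vertex, and in particular $V(\pp_{ab}) \subseteq V(\pp_{ac}) \cup V(\pp_{bc})$. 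Applying this with $a = z_i$, $b = z_j$, $c = 1$ (or equivalently $c = 2$) gives that $\pp_{1 z_i}$ is contained in $\pp_{1 z_j} \cup \pp_{z_i z_j}$; since $2 \in V(\pp_{1 z_i})$ but $2 \notin V(\pp_{z_i z_j})$ by assumption, we must have $2 \in V(\pp_{1 z_j})$ — which is consistent — so this particular slicing alone does not immediately close the argument, and I would instead push the median vertex itself to the fore.

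The cleaner route: let $\omega$ be the median of $z_i$, $z_j$, and $2$, i.e.\ the unique vertex lying on all three of $\pp_{z_i z_j}$, $\pp_{z_i 2}$, $\pp_{z_j 2}$. Since $2 \in V(\pp_{1 z_i}) \cap V(\pp_{1 z_j})$, the path $\pp_{z_i z_j}$ passes through $2$ unless $z_i$ and $z_j$ both lie ``on the same side of $2$'' as seen from $1$; more precisely, $2$ lies on $\pp_{z_i z_j}$ precisely when $z_i$ and $z_j$ are in different components of $T \smallsetminus (2)$. So the assumption $2 \notin V(\pp_{z_i z_j})$ forces $z_i$ and $z_j$ into the same component $Y$ of $T \smallsetminus (2)$. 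Now $1$ lies in a different component of $T \smallsetminus (2)$ than $z_i$ (because $2 \in V(\pp_{1 z_i})$), hence in a different component than $z_j$ as well. Let $v$ be the vertex of $Y$ adjacent to $2$; then $V(\pp_{z_i z_j}) \cup \{2\} \cup$ (the edge $2v$) lies in $V(Y) \cup \{2\}$, and this whole subtree attaches to the rest of $T$ only through $2$. In particular $1 \notin V(Y) \cup \{2\} = V(Y)\cup\{2\}$, and the unique path from any vertex of $Y$ to $1$ must pass through $2$; the induced subgraph $[W_i \cup W_j]$ would then have to route through $2$ to reach $1$, but $W_i$ and $W_j$ are distinct connected components of $[V_B]$, and $2 \notin V_B$, so $z_i$ and $z_j$ cannot be connected within $[V_B]$ through $2$. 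The contradiction I am really after is that $[V_1']$ is a tree containing $2, z_i, z_j$: since $z_i, z_j \in V(Y)$ and $Y$ is a component of $T\smallsetminus(2)$ not containing $1$, the paths $\pp_{2 z_i}$ and $\pp_{2 z_j}$ lie entirely in $[V_1']$, and the subtree of $[V_1']$ spanned by $V(Y) \cap V_1'$ minus $2$ is connected, forcing $z_i$ and $z_j$ into the \emph{same} component of $[V_B]$ — contradicting $i \neq j$.

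I expect the main obstacle to be phrasing the last step with just the right amount of tree-structure bookkeeping: one must be careful that removing $2$ from $[V_1']$ (equivalently, looking at $T \smallsetminus (2)$) genuinely disconnects $\{1\} \cup (V_2 \cup \cdots \cup V_m)$ from the ``far side'' containing $z_i, z_j$, and that this far side, intersected with $V_B$, is connected — which is exactly what makes $z_i$ and $z_j$ land in one $W$-component. Given the earlier lemmas, though, the honest write-up is short: assume $2 \notin V(\pp_{z_i z_j})$; deduce $z_i, z_j$ lie in a common component of $T \smallsetminus (2)$ avoiding $1$; observe this component's vertices in $V_B$ form a connected subgraph of $[V_B]$ containing both $z_i$ and $z_j$; conclude $W_i = W_j$, contradicting $i \neq j$. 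Hence $2 \in V(\pp_{z_i z_j})$.
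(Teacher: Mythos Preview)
Your approach is correct but takes a different route from the paper's. The paper argues directly: since $z_i$ and $z_j$ lie in different components of $[V_B]$, the path $\pp_{z_i z_j}$ (which stays inside $V_1'$ because $[V_1']$ is a tree containing both endpoints) must pass through some vertex $x \in V_1' \smallsetminus V_B = \{2\} \cup V_A$. If $x = 2$ we are done; if $x \in V_A$, then Lemma~\ref{2inpath} gives $2 \in V(\pp_{z_i x}) \subseteq V(\pp_{z_i z_j})$. That is essentially a three-line proof that recycles the earlier lemma.

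Your contradiction argument is sound as well: if $2 \notin V(\pp_{z_i z_j})$ then $z_i, z_j$ lie in a common component $Y$ of $T \smallsetminus (2)$; since $2 \in V(\pp_{1 z_i})$ we get $1 \notin V(Y)$, and hence every $y \in V(Y)$ satisfies $2 \in V(\pp_{1y})$ and (because $\pp_{y2} \subseteq V(Y) \cup \{2\}$ avoids $1$) also $y \in V_1$. Thus $V(Y) \subseteq V_B$, so $[V(Y)]$ is a connected subgraph of $[V_B]$ containing both $z_i$ and $z_j$, forcing $W_i = W_j$. The one step you leave fuzzy is exactly this inclusion $V(Y) \subseteq V_B$: you phrase it as ``this component's vertices in $V_B$ form a connected subgraph,'' which reads like an intersection rather than a containment; make the containment explicit and the proof is complete. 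Your route avoids Lemma~\ref{2inpath} at the cost of a bit more component bookkeeping, while the paper's route is shorter precisely because it cashes in that lemma.
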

\begin{proof}

Since $z_i$ and $z_j$ belong to different components of $[V_B]$, there must exist a vertex $x$ such that
\[x \in V_1',~~ x \notin  V_B,~~\mbox{and}~~x \in V(\pp_{z_iz_j}).\] 
If $x=2$, then we are done. Now, assume $x \neq 2$.
Then, $x \in V_A$.
Since $z_i\in V_B$, Lemma \ref{2inpath} implies that $2 \in V(\pp_{z_ix})$. This implies $2 \in V(\pp_{z_iz_j})$. 
The proof is complete.
\end{proof}

\begin{lemma}
$[W_{1}],\dotsc,[W_s]$ are connected components of $T\smallsetminus (2)$.
\end{lemma}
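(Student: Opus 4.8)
The plan is to show that each $[W_k]$ is a connected component of $T\smallsetminus(2)$ by proving two things: (a) each $[W_k]$ is connected, and (b) no vertex outside $W_k$ (other than $2$ itself) is adjacent to a vertex of $W_k$ in $T$. The connectedness in (a) requires a small argument: by definition $[W_k]$ is a connected component of the induced subgraph $[V_B]$, so it is connected \emph{as a subgraph of $[V_B]$}; but since $V_B\subseteq V_1'$ and $[V_1']$ is a tree (being a component of $T\smallsetminus(1)$), paths within $[V_B]$ are genuine paths of $T$, so $[W_k]$ is connected in $T$ as well. More carefully, for $x,y\in W_k$ the unique $T$-path $\pp_{xy}$ lies inside $[V_1']$; I would argue it avoids vertex $2$ (using that both $x,y\in V_B$ together with Lemma~\ref{2inpath}-type reasoning — if the path hit $2$ it would also leave $V_B$) and avoids $V_A$ (again $2\in\pp_{xy}$ would follow from Lemma~\ref{2inpath} if the path met $V_A$, a contradiction), hence $\pp_{xy}\subseteq V_B$ and in fact within the single $[V_B]$-component $W_k$.

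Next, for the maximality part (b), suppose $g\in W_k$ is adjacent in $T$ to some $v\notin W_k$ with $v\neq 2$. I would run through the exhaustive case analysis on where $v$ can live, exactly paralleling the proof of part (ii) of Lemma~\ref{u1u1}: if $v\in W_j$ for $j\neq k$ then Lemma~\ref{wcomp} forces $2\in V(\pp_{vg})$, impossible since $v,g$ are adjacent; if $v\in V_A$ then Lemma~\ref{2inpath} gives $2\in V(\pp_{vg})$, again impossible; if $v\in V_2\cup\cdots\cup V_m$ then $v$ and $g$ lie in different components of $T\smallsetminus(1)$ (as $g\in V_1\subseteq V_1'$), so $1\in V(\pp_{vg})$, impossible; and $v=1$ is excluded because $g\in V_1'$ means $g$ is not adjacent to $1$ unless $g=u_1$, but even then the neighbor $1$ is not in $V_B\cup\{2\}$ — actually the cleanest statement is that the only vertex adjacent to $g$ that is not in $V_1'$ could be $1$, and if $g$ were adjacent to $1$ then the $[V_1']$-component... here I should simply note $1\notin W_k$ and handle it as its own case: $v=1$ would mean $g$ is adjacent to $1$, but then $d_{2g}=d_{21}+d_{1g}$ or the relevant path structure contradicts $g\in V_B$ (since $2\in\pp_{1g}$ forces the path from $1$ to $g$ to pass through $2$, which cannot happen if they are adjacent unless $g=2$). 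So the only remaining neighbor type is $v=2$ itself.

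Combining (a) and (b): each $[W_k]$ is connected, and adding any vertex outside it (except $2$) would disconnect it or is simply not adjacent, so $W_k\cup\{2\}$ induces a connected subgraph and $W_k$ is precisely the vertex set of a connected component of $T\smallsetminus(2)$. I would also remark that the $W_k$ together with possibly other components exhaust $T\smallsetminus(2)$, but for the statement as given only the identification of each individual $W_k$ with a component of $T\smallsetminus(2)$ is needed.

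The main obstacle I anticipate is the bookkeeping in case (b) for the neighbor $v=1$: one must be careful that $V_1'$ being a component of $T\smallsetminus(1)$ means exactly one of its vertices ($u_1$) is adjacent to $1$ in $T$, so one cannot blithely say "no vertex of $V_1'$ is adjacent to $1$." The resolution is that even if $g=u_1\in W_k$ (which would require $u_1\in V_B$), the fact that $u_1\in V(\pp_{12})$ puts $u_1\in V_A$ unless $q=0$, i.e., unless $1$ and $2$ are adjacent — and in that degenerate case $V_B$ is governed differently. Handling this edge case cleanly, or alternatively observing that $u_1\in V_A$ always (since $2\notin V(\pp_{1u_1})$ when $q\geq 1$, and when $q=0$ we have $2$ adjacent to $1$ so $V_1=V_B$ with $2$ itself the cut point), is the one place where I would slow down; everything else is a direct transcription of the earlier component arguments.
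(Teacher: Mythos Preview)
Your proposal is correct and follows the same case-analysis strategy as the paper: show each $[W_k]$ is connected (which the paper dispatches in one line, since $W_k$ is by definition a component of $[V_B]$ --- your longer discussion of $T$-paths in part (a) is unnecessary), then rule out adjacencies from $g\in W_k$ to any $v\notin W_k$, $v\neq 2$, via Lemma~\ref{wcomp}, Lemma~\ref{2inpath}, and the component structure of $T\smallsetminus(1)$. Your explicit treatment of the possibility $v=1$ is actually more careful than the paper's (which folds it silently into ``$v\notin V_1'$''); your observation that $g\in V_B$ forces $2\in V(\pp_{1g})$, so $g$ cannot be adjacent to $1$, is the clean way to dispose of it.
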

\begin{proof}
Each $[W_j]$ is connected. 
Suppose $[W_j]$ is not a component in $T \smallsetminus (2)$.
Then there exists a vertex $g \in W_j$ adjacent to $v \in V(T \smallsetminus (2)) \smallsetminus W_j$.

Let $v \in W_{k}$, where $k\neq j$. 
Then, by Lemma \ref{wcomp}, $2 \in V(\pp_{vg})$. This is not possible.

Suppose $v \in V_A$. Then, by Lemma \ref{2inpath}, $2 \in V(\pp_{vg})$.
This is a contradiction.

Suppose $v \notin V_1'$. Then, $v \in V_2 \cup \cdots \cup V_m$;
hence $1 \in V(\pp_{vg})$. This is a contradiction.

Thus, $[W_j]$ is a component in $T \smallsetminus (2)$. 
This completes the proof.
\end{proof}

\begin{lemma}\label{detvb}
The following items hold.
\begin{enumerate}
\item[{\rm (i)}] $\det(R[V_B,V_B])=\prod \limits_{\nu=1}^{s} \det(R[{W_\nu},{W_\nu}])$.
\item[{\rm (ii)}] $\det(R[W_i,W_i])\geq 0~~~~i=1,\dotsc,s$.

\item[{\rm (iii)}] $\det(R[V_B,V_B]) \geq 0$.
\end{enumerate}
\end{lemma}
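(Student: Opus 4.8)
The plan is to prove the three items of Lemma~\ref{detvb} in sequence, closely mirroring the structure already used for $R[U_i,U_i]$ in Lemma~\ref{u1u1} and for $R[V_A,V_A]$. For item (i), I would show that whenever $a\in W_\mu$ and $b\in W_\nu$ with $\mu\neq\nu$, we have $R_{ab}=0$. This is the key computation: using Definition~\ref{R},
\[
R_{ab}=-d_{21}+d_{a1}+d_{2b}-d_{ab},
\]
and since $a,b\in V_B$ we have $2\in V(\pp_{1a})$ and $2\in V(\pp_{1b})$, so $d_{a1}=d_{a2}+d_{21}$ and $d_{b1}=d_{b2}+d_{21}$; substituting the first of these gives $R_{ab}=d_{a2}+d_{2b}-d_{ab}$, and by Lemma~\ref{wcomp} the path $\pp_{ab}$ passes through $2$, so $d_{ab}=d_{a2}+d_{2b}$ and hence $R_{ab}=0$. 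Symmetrically $R_{ba}=0$. Then item~(c) of (P3) shows $R[V_B,V_B]$ is similar to a block diagonal matrix with diagonal blocks $R[W_\nu,W_\nu]$, which yields the product formula for the determinant.

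For item (ii), the idea is to apply Lemma~\ref{big} with $\al=2$. We have already established (in the lemma immediately preceding Lemma~\ref{detvb}) that each $[W_\nu]$ is a connected component of $T\smallsetminus(2)$ not containing $1$, and of course $2\in V(\pp_{12})$. Taking $\widetilde{X}=X=[W_\nu]$, $E=W_\nu$, and letting $u$ be the neighbour of $2$ in $[W_\nu]$, the hypotheses of Lemma~\ref{big} are met, so $R[W_\nu,W_\nu]$ is positive semidefinite; in particular $\det(R[W_\nu,W_\nu])\geq 0$. Item~(iii) is then immediate by combining items~(i) and~(ii), since a product of nonnegative reals is nonnegative.

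The only slightly delicate point — and the main obstacle to watch for — is the verification in item~(i) that the triangle-type identities $d_{a1}=d_{a2}+d_{21}$ and $d_{ab}=d_{a2}+d_{2b}$ actually hold, i.e. that the relevant paths all pass through vertex $2$. The first follows directly from the definition of $V_B$ (namely $a\in V_B$ means $2\in V(\pp_{1a})$), and the second is exactly the content of Lemma~\ref{wcomp}. So all the geometric facts needed have already been isolated, and the proof reduces to bookkeeping. I would present it as: prove $R_{ab}=R_{ba}=0$ for $a,b$ in distinct $W_\nu$'s, invoke (P3)(c) for the block-diagonal form and hence item~(i); invoke Lemma~\ref{big} with $\al=2$ for item~(ii); and conclude item~(iii) from the two.

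\begin{proof}
Let $a\in W_\mu$ and $b\in W_\nu$ with $\mu\neq\nu$. By Definition~\ref{R},
\[R_{ab}=-d_{21}+d_{a1}+d_{2b}-d_{ab}.\]
Since $a\in V_B$, $2\in V(\pp_{1a})$, so $d_{a1}=d_{a2}+d_{21}$. Hence
\[R_{ab}=d_{a2}+d_{2b}-d_{ab}.\]
By Lemma~\ref{wcomp}, $2\in V(\pp_{ab})$, so $d_{ab}=d_{a2}+d_{2b}$. Therefore $R_{ab}=0$. Interchanging the roles of $a$ and $b$ gives $R_{ba}=0$ as well. By item~(c) in (P3), $R[V_B,V_B]$ is similar to a block diagonal matrix with diagonal blocks $R[W_1,W_1],\dotsc,R[W_s,W_s]$. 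Consequently,
\[\det(R[V_B,V_B])=\prod_{\nu=1}^{s}\det(R[W_\nu,W_\nu]).\]
This proves (i).

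Fix $\nu\in\{1,\dotsc,s\}$. We apply Lemma~\ref{big} with $\al=2$. We have $2\in V(\pp_{12})$, and by the previous lemma, $[W_\nu]$ is a connected component of $T\smallsetminus(2)$; it does not contain $1$ since $W_\nu\subseteq V_B\subseteq V_1$. Let $u$ be the vertex of $[W_\nu]$ adjacent to $2$. Putting $\widetilde{X}=X=[W_\nu]$ and $E=W_\nu$, Lemma~\ref{big} shows that $R[W_\nu,W_\nu]$ is positive semidefinite. In particular,
\[\det(R[W_\nu,W_\nu])\geq 0.\]
This proves (ii).

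Combining (i) and (ii), $\det(R[V_B,V_B])$ is a product of nonnegative numbers and is therefore nonnegative. This proves (iii).
\end{proof}
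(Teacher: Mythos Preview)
Your proof is correct and follows essentially the same approach as the paper: you show $R_{ab}=0$ for $a,b$ in distinct $W_\nu$'s via the identities $d_{a1}=d_{a2}+d_{21}$ and $d_{ab}=d_{a2}+d_{2b}$ (the paper applies these two substitutions in the opposite order, but the computation is identical), invoke (P3)(c) for the block-diagonal form, and then apply Lemma~\ref{big} with $\al=2$ and $\widetilde{X}=X=[W_\nu]$ for item~(ii). Your added remark that $[W_\nu]$ does not contain $1$ because $W_\nu\subseteq V_B\subseteq V_1$ makes the verification of Lemma~\ref{big}'s hypotheses slightly more explicit than the paper's version.
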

\begin{proof}
The sets $W_1,\dotsc,W_s$ partition $V_B$.
Let $a \in W_i$ and $b \in W_j$. We claim that if $i \neq j$, then $R_{ab}=0$.
By definition
\begin{equation}\label{zizj1}
R_{ab}=-d_{21}+d_{a1}+d_{2b}-d_{ab}.
\end{equation}
By Lemma \ref{wcomp}, $2\in V(\pp_{ab})$. Hence
\begin{equation}\label{zizj2}
d_{ab}=d_{a2}+d_{2 b}.
\end{equation}
By (\ref{zizj1}) and (\ref{zizj2}),
\begin{equation}\label{zizj3}
R_{ab}=-d_{21}+d_{a1}  +d_{2b} - d_{a2}  -  d_{2b}=-d_{21}+d_{a1}  - d_{a2} . 
\end{equation}
Since $a \in V_B$, $2\in V(\pp_{a1})$, 
\begin{equation}\label{zizj4}
d_{a1}=d_{a2}+d_{2 1}.
\end{equation}
By (\ref{zizj3}) and (\ref{zizj4}),
 \[R_{ab}=0.\]
By (P3), $R[V_B,V_B]$ is similar to a block diagonal matrix with diagonal blocks \[R[{W_1},{W_1}],\dotsc,R[{W_s},{W_s}].\] Therefore,
 \[\det(R[V_B,V_B])= \prod_{i=1}^{s} \det(R[{W_i},{W_i}]).\]
This completes the proof of (i).

The proof of (ii) follows by substituting $\widetilde{X}=X=[W_i]$, $E=W_i$ and $\al=2$ in Lemma \ref{big}.

Item (iii) is immediate from (i) and (ii).
\end{proof}
Corollary \ref{detv1}, Lemma    \ref{detvageq}  and  Lemma \ref{detvb}   give the following.
\begin{lemma} \label{detvbgeq}
\[\det(R[V_1,V_1])\geq 0.\]
\end{lemma}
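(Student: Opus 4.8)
The statement to prove is Lemma \ref{detvbgeq}: $\det(R[V_1,V_1]) \geq 0$.

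The plan is to simply assemble the pieces already established. First I would invoke Corollary \ref{detv1}, which gives the factorization
\[
\det(R[V_1,V_1]) = \det(R[V_A,V_A]) \cdot \det(R[V_B,V_B]).
\]
This reduces the problem to showing that each of the two factors on the right is nonnegative, after which the product is nonnegative.

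Next I would quote Lemma \ref{detvageq}, which states $\det(R[V_A,V_A]) \geq 0$; this factor is handled. Then I would quote Lemma \ref{detvb}(iii) (equivalently item (iii) of that lemma), which states $\det(R[V_B,V_B]) \geq 0$; this factor is handled as well. Both of these are already proven in the excerpt, the former resting ultimately on Lemma \ref{u1u1} and Lemma \ref{big}, the latter on the component decomposition of $[V_B]$ together with Lemma \ref{big}.

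Finally, multiplying the two nonnegative quantities yields $\det(R[V_1,V_1]) \geq 0$, completing the proof. There is essentially no obstacle here: this lemma is purely a bookkeeping consolidation of Corollary \ref{detv1}, Lemma \ref{detvageq}, and Lemma \ref{detvb}, exactly as the sentence preceding the statement already signals. The only thing to be careful about is citing the correct labels so that the logical dependency chain is transparent to the reader.

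\begin{proof}
By Corollary \ref{detv1},
\[
\det(R[V_1,V_1]) = \det(R[V_A,V_A]) \cdot \det(R[V_B,V_B]).
\]
By Lemma \ref{detvageq}, $\det(R[V_A,V_A]) \geq 0$, and by item (iii) of Lemma \ref{detvb}, $\det(R[V_B,V_B]) \geq 0$. Hence $\det(R[V_1,V_1]) \geq 0$.
\end{proof}
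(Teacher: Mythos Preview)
Your proof is correct and follows exactly the approach of the paper: the sentence preceding the lemma already indicates that it is obtained by combining Corollary \ref{detv1}, Lemma \ref{detvageq}, and Lemma \ref{detvb}, and you have written out precisely that combination.
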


\subsection{Proof of main result}
\begin{theorem} \label{main}
The Moore-Penrose inverse of the distance Laplacian matrix of a tree is a {\bf Z} matrix.
\end{theorem}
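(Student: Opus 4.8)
The plan is simply to assemble the lemmas already in hand. First I would dispose of the case $n=2$: there $L_D$ is the $2\times2$ matrix $\begin{bmatrix} d_{12} & -d_{12}\\ -d_{12} & d_{12}\end{bmatrix}$, whose Moore--Penrose inverse is $\frac{1}{4d_{12}}\begin{bmatrix} 1 & -1\\ -1 & 1\end{bmatrix}$, a {\bf Z} matrix. For $n>2$, recall that $S=\Delta-D$ is precisely the distance Laplacian $L_D$, and that group inverse and Moore--Penrose inverse coincide here, so it suffices to prove that every off-diagonal entry of $S^\dag=[s^\dag_{ij}]$ is non-positive.

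Next I would chain the factorization results for $\det(R)$, where $R$ is the matrix of Definition \ref{R} attached to the pair $(1,2)$. By Corollary \ref{detv2}, $\det(R)=\prod_{i=1}^{m}\det(R[V_i,V_i])$. Lemma \ref{detvi} gives $\det(R[V_j,V_j])\ge 0$ for $j=2,\dots,m$, and Lemma \ref{detvbgeq} gives $\det(R[V_1,V_1])\ge 0$. Hence $\det(R)\ge 0$, and Lemma \ref{s12detr} then yields $s^\dag_{12}\le 0$.

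Finally I would observe that the construction of $R$ in Definition \ref{R} and the entire chain of Lemmas \ref{S}--\ref{detvbgeq} used nothing about the pair $(1,2)$ beyond the vertex labelling: given any two distinct vertices $a,b$ of $T$, relabelling so that $a\mapsto 1$ and $b\mapsto 2$ and running the same argument produces $s^\dag_{ab}\le 0$. Therefore all off-diagonal entries of $S^\dag=L_D^\dag$ are non-positive, i.e.\ $L_D^\dag$ is a {\bf Z} matrix, which is the assertion of the theorem.

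There is no real obstacle left at this stage — the theorem is a one-paragraph assembly. The genuine difficulty of the whole development, in retrospect, is Lemma \ref{big}: proving that each principal submatrix $R[E,E]$ indexed by a connected subtree hanging off the path $\pp_{12}$ is positive semidefinite, via the induction that strips off a pendant vertex, exploits the equality of the last two columns of $P$ (Claims 2 and 3), and uses the diagonal dominance from the triangle inequality (Claim 4). That lemma, fed through the successive block-triangularizations of $R$ via (P3) — over the components of $T\smallsetminus(1)$, then the split $V_1=V_A\cup V_B$, then the partition of $V_A$ into the $U_i$, and of each $U_i$ into the $Q_{ik}$ — is what makes each diagonal block have non-negative determinant; collecting these facts into Theorem \ref{main} is then routine.

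\begin{proof}
If $n=2$, then $L_D=\begin{bmatrix} d_{12} & -d_{12}\\ -d_{12} & d_{12}\end{bmatrix}$ and $L_D^\dag=\frac{1}{4d_{12}}\begin{bmatrix} 1 & -1\\ -1 & 1\end{bmatrix}$, which is a {\bf Z} matrix. Assume $n>2$. With the notation fixed above, $S=\Delta-D=L_D$ and $S^\dag=L_D^\dag$. By Corollary \ref{detv2}, Lemma \ref{detvi} and Lemma \ref{detvbgeq},
\[\det(R)=\det(R[V_1,V_1])\prod_{i=2}^{m}\det(R[V_i,V_i])\ge 0.\]
By Lemma \ref{s12detr}, $s^\dag_{12}\le 0$. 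For any distinct vertices $a,b$, relabelling the tree so that $a$ plays the role of $1$ and $b$ the role of $2$ and repeating the argument gives $s^\dag_{ab}\le 0$. Hence all off-diagonal entries of $S^\dag=L_D^\dag$ are non-positive, so $L_D^\dag$ is a {\bf Z} matrix.
\end{proof}
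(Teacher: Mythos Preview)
Your proof is correct and follows exactly the paper's approach: the paper's proof is the single line ``The proof follows from Lemmas \ref{s12detr}, \ref{detvi} and \ref{detvbgeq},'' and you have simply spelled out that assembly, together with the $n=2$ verification and the relabelling remark that the paper handles at the start of Section~3. Nothing is missing or different.
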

\begin{proof}
The proof follows from Lemmas \ref{s12detr}, \ref{detvi} and \ref{detvbgeq}. 
\end{proof}

\section{An example}
We can ask if the result true for any Euclidean distance matrix.
Here is a counter example.
Let \[D=\left[
\begin{array}{ccccccccccccc}0  &1  &4 & 9& 16\\
 1 & 0&  1&  4 & 9\\  4 & 1 & 0  &1  &4\\
 9 & 4  &1 & 0 & 1\\
16 & 9 & 4 & 1 & 0
\end{array}\right].\] Using standard techniques, it can be verified that $D$ is an Euclidean distance matrix.
Let $$\Delta:=\Diag(\eta_1,\eta_2,\eta_3,\eta_4,\eta_5)~~\mbox{and}~~~
S:=\Delta-D.$$ 
Then,
\[S^\dag=\left[
\begin{array}{ccccccccccccc}
 \frac{2}{81}&  \frac{-1}{81}&  \frac{-1}{90}& \frac{ -1}{405}& \frac{  1}{810}\\
\frac{ -1}{81} & \frac {19}{405} & \frac { -1}{45} & \frac{ -4}{405} &\frac { -1}{405}\\
 \frac{-1}{90 } & \frac{-1}{45}& \frac {   1}{15} & \frac {  -1}{45} & \frac {  -1}{90}\\
\frac{-1}{405} & \frac{-4}{405} & \frac{  -1}{45} & \frac{ 19}{405} & \frac{ -1}{81}\\
 \frac{1}{810} & \frac{ -1}{405 } & \frac{ -1}{90 } & \frac{-1} {81}  &\frac{   2}{81}
\end{array}\right].\]
We see that
$s_{15}^{\dag}>0$.

Acknowledgement: We thank Prof. R. B. Bapat for introducing this problem. The  result 
in this paper was observed by
Prof. Michael Neumann.

R. Balaji and Vinayak Gupta \\
Department of Mathematics \\
Indian Institute of Technology -Madras \\
Chennai 600036 \\
India.
\end{document}